\newcommand{\E}{\mathbf{E}}
\newcommand{\p}{\mathbf{P}}
\newcommand{\R}{\mathbb{R}}
\newcommand{\N}{\mathbb{N}}
\newcommand{\fl}{\overline{f}}
\newcommand{\ind}{\mathbb{I}}
\theoremstyle{plain}
\newtheorem{theorem}{Theorem}
\newtheorem{lemma}[theorem]{Lemma}
\newtheorem{proposition}[theorem]{Proposition}
\newtheorem{corollary}[theorem]{Corollary}
\theoremstyle{remark}
\title{Functional limit theorem for branching processes \\ in 
nearly degenerate varying environment}
\author{P\'eter Kevei\footnote{Bolyai Institute, University of Szeged,
Hungary.
E-mail: \texttt{kevei@math.u-szeged.hu}} \and 
Kata Kubatovics\footnote{Bolyai Institute, University of Szeged, Hungary.
E-mail: \texttt{kubatovics@server.math.u-szeged.hu}}}
\date{}
\begin{document}

\maketitle

\begin{abstract}
We investigate branching processes in nearly degenerate varying environment, 
where the offspring distribution converges to the degenerate 
distribution at 1. Such processes die out almost surely, therefore, 
we condition on non-extinction or add inhomogeneous immigration.
Extending our one-dimensional limit results in \cite{KevKub}
we derive functional limit theorems.
In the former case, the limiting process is a 
time-changed simple birth-and-death process on $(-\infty, 0]$
conditioned on survival at $0$, while in the latter, it is a 
time-changed stationary continuous time branching process with 
immigration.

Keywords: functional limit theorem; Yaglom type limit; birth-and-death process; continuous time branching process with immigration; quasi-stationary distribution.
\end{abstract}

\section{Introduction}

A \emph{branching process in varying environment} (BPVE) $(X_n)_{n\in \N}$ is defined as
\begin{equation} \label{eq:X}
X_0 = 1, \quad \text{and } 
X_n = \sum_{j=1}^{X_{n-1}} \xi_{n,j}, \quad n\in\N = \{ 1, 2, \ldots \},
\end{equation}
where $\{\xi_{n,j}\}_{n,j\in\N}$ are nonnegative independent random variables such that for each $n$, $\{\xi_{n,j}\}_{j\in\N}$ are identically distributed; let $\xi_n$ denote a generic copy.
Adding immigration leads to a \emph{branching process in varying environment with immigration} (BPVEI) $(Y_n)_{n\in \N}$ defined as
\begin{equation} \label{eq:Y}
Y_0=0, \quad \text{and }
Y_n=\sum_{j=1}^{Y_{n-1}}\xi_{n,j}+\varepsilon_n, \quad n\in\N,
\end{equation}
where $\{\xi_n, \xi_{n,j}, \varepsilon_n\}_{n,j\in\N}$ are non-negative, independent random variables such that $\{\xi_n, \xi_{n,j}\}_{j\in\N}$ are identically distributed.

Put $\fl_n = \E (\xi_n)$ for the offspring mean in generation $n$. To exclude trivialities, we always assume that $\fl_n > 0$ for all $n$.
Let $f_n(s) = \E (s^{\xi_n})$, $s\in[0,1]$, be the generating function  (g.f.) of the offspring distribution in the $n$th generation.
As in \cite{KevKub}, we are interested in 
\emph{branching processes in nearly degenerate environment}, 
that is, we always assume the following conditions:
\begin{itemize}
\item[(C1)] $\lim_{n\to\infty}\fl_n = 1$, 
$\sum_{n=1}^{\infty}(1-\fl_n)_+ = \infty$, 
$\sum_{n=1}^\infty (\fl_n - 1)_+ < \infty$,
\item[(C2)] 
$\lim_{n\to\infty, \fl_n < 1} \frac{f_n''(1)}{1-\fl_n} = \nu \in [0,\infty)$,
and $\left( \frac{f_n''(1)}{|1- \fl_n|} \right)_{n}$ is bounded,
\item[(C3)] if $\nu>0$, then $\lim_{n\to \infty, \fl_n < 1}
\frac{f_n'''(1)}{1- \fl_n} = 0$, 
and $\left( \frac{f_n'''(1)}{|1- \fl_n|} \right)_{n}$ is bounded,
\end{itemize}
where here and later on $\lim_{n\to\infty, \fl_n < 1}$ means that the convergence holds along the subsequence $\{n: \fl_n <1\}$.
Since by (C1) subcritical regimes dominate in the process, 
$\E (X_n) = \prod_{j=1}^n \fl_j =: \fl_{0,n} \to 0$ as $n\to\infty$, 
the process dies out almost surely.
Condition (C2) implies that $f_n''(1) \to 0$, thus $f_n(s) \to s$, the branching mechanism converges to the degenerate branching.

\smallskip

Due to the recent work of Kersting \cite{Kersting} and the monograph by Kersting and Vatutin \cite{KerVat} BPVEs and BPVEIs have become an
interesting and current topic in probability; see the references 
in \cite{KevKub}.
Nearly degenerate BPVEIs were introduced in \cite{GyIPV} by Gy\"orfi et al., where it was assumed that the offspring have Bernoulli distribution, for references see \cite{KevKub}.

\smallskip

In this paper we investigate the long-term behavior of the paths
of nearly degenerate BPVEs and BPVEIs. 
To be able to derive convergence, it is crucial to find a proper scaling for the process. Condition (C1) implies that for all nearly degenerate BPVEs there is a sequence 
of natural numbers $(A(n))_{n\in\N}$ increasing to infinity such that
\begin{equation}\label{eq:A(n)}
\fl_{0,A(n)} \sim n^{-1}, \quad \text{as $n\to\infty$},
\end{equation}
which turns out to be the proper scaling.
In what follows, we are interested in the processes $(X_{A(nt)})_{t>0}$,
$(Y_{A(nt)})_{t > 0}$, and to ease notation $A(nt)$ is meant as $A(\lfloor nt \rfloor)$.

The simplest example is given by
\[
\fl_n = 
\begin{cases}
1 - \frac{\alpha}{n}, & n \geq \lfloor \alpha \rfloor + 1, \\
1, &  \text{otherwise},   
\end{cases}
\]
for some $\alpha > 0$. Then $\fl_{0,n} \sim c n^{-\alpha}$,
for some $c > 0$ as $n\to\infty$. Thus with 
$A(n) = \lfloor c^{1/\alpha} n^{1 / \alpha} \rfloor$, 
the convergence in \eqref{eq:A(n)} holds.

In Theorem \ref{thm:fdd>1}, we prove that $((X_{A(nt)})_{t\geq \varepsilon} | X_{A(n)} > 0)$ converges in distribution in the 
Skorokhod space of c\`adl\`ag functions 
to a time-transformed simple birth-and-death process conditioned 
on survival. We analyze the limiting process, and explore 
its relation to the quasi-stationary distribution.
In Theorem \ref{thm:conv-immig} we show that $(Y_{A(nt)})_{t > 0}$ converges 
to a time-transformed continuous time branching process with immigration. 
In both cases we only scale in time, therefore the 
limiting processes are 
continuous time Markov chains with state space $\N$, and 
$\N \cup \{ 0 \}$, respectively. Convergence to the 
Poisson process is a well-studied area; see 
\cite[Section 3.12]{Billingsley}. However, we are not aware 
of results on convergence to non-monotone Markov chains.
The proof of finite finite dimensional distributions is 
standard, but the proof of tightness is rather troublesome.

Section \ref{sec:main} contains the main results, while proofs are gathered in Section \ref{sec:proofs}.

\section{Main results}\label{sec:main}

\subsection{Nearly degenerate BPVEs}\label{subsec:BPVE}

Consider a simple continuous time birth-and-death process $(Z(t))_{t\geq -w}$
defined on the time interval $t \in [-w, \infty)$ for some $w \geq 0$, 
with birth rate $\tfrac{\nu}{2}$ and death rate $1 + \tfrac{\nu}{2}$.
The transition generating function of $Z$ is given as (see 
e.g.~\cite[Chapter III.5]{AthreyaNey})
\begin{equation} \label{eq:db-genfnc}
\begin{split}
F(s,t) & = 
\E [ s^{Z(t + u)} | Z(u ) = 1 ] \\
& = \frac{
\left( 1 + \frac{\nu}{2} \right) (s-1) - e^{t} (\frac{\nu}{2} s - (1 + \frac{\nu}{2}))
}{
\frac{\nu}{2} (s-1) - e^{t} (\frac{\nu}{2} s -(1 + \frac{\nu}{2}))} \\
& = 1 - e^{-t} \left( \frac{1}{1-s} + \frac{\nu}{2} ( 1 - e^{-t} ) \right)^{-1}.
\end{split}
\end{equation}

For $a < b \leq \infty$, 
the Skorokhod space of c\`adl\`ag functions on $[a,b]$  (or $[a, \infty)$)
with the usual metric is denoted by $D([a,b])$ (or $D([a,\infty))$),
and $\stackrel{\mathcal{D}}{\longrightarrow}$ stands for 
convergence in distribution in the Skorokhod space.
A random variable $Y$ has \emph{geometric distribution with parameter $p \in (0,1]$},
$Y \sim \text{Geom}(p)$, if $\p ( Y = k) = (1- p)^{k-1} p$, for $k = 1, 2, \ldots$.

\begin{theorem} \label{thm:fdd>1}
Let $(X_n)_{n\in\N}$ be a BPVE satisfying conditions (C1)--(C3). 
Then, for any $\varepsilon \in (0,1]$,
\[
\mathcal{L} ( (X_{A(nt)} )_{t \geq \varepsilon} | X_{A(n)} > 0 ) 
\stackrel{\mathcal{D}}{\longrightarrow} 
\mathcal{L} (({Z}(\log t))_{t \geq \varepsilon} | Z(0) > 0),
\]
as $n \to \infty$,
where $(Z(u))_{u\geq \log \varepsilon}$ is a simple birth-and-death process with initial distribution
$Z( \log \varepsilon ) \sim \text{Geom}(\frac{2}{2+\nu})$, birth rate 
$\frac{\nu}{2}$ and death rate $1 + \frac{\nu}{2}$.
\end{theorem}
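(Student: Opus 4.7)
The plan is to establish convergence in $D([\varepsilon,\infty))$ via the standard two-step program: first the convergence of finite-dimensional distributions, then tightness of the conditional laws. Both steps hinge on the branching property of $X$ and the Markov property of the target process $Z$.

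For the finite-dimensional distributions, fix $\varepsilon \leq t_1 < t_2 < \cdots < t_k$. By iterating the branching property, the joint conditional generating function
\[
\E\Bigl[\prod_{i=1}^{k} s_i^{X_{A(nt_i)}} \Bigm| X_{A(n)} > 0\Bigr]
\]
can be rewritten, right to left, as a single one-dimensional quantity of the form $\E[\tilde{s}^{X_{A(nt_1)}}\mathbb{I}(X_{A(n)}>0)]/\p(X_{A(n)}>0)$, where $\tilde{s}$ is built by successively composing the offspring generating functions $f_{A(nt_i),A(nt_{i+1})}$ and inserting the $s_i$'s. Under (C1)--(C3), the Taylor expansion $f_n(s) = s + \tfrac{1}{2}f_n''(1)(s-1)^2 + O((1-s)^3)$ iterated over the block from generation $A(ns)$ to $A(nt)$ shows that $n(1-f_{A(ns),A(nt)}(s))$ converges to the functional of $(s,t)$ read off from \eqref{eq:db-genfnc}, which matches the semigroup of $Z$. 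The limiting initial law $\mathrm{Geom}\bigl(\tfrac{2}{2+\nu}\bigr)$ then appears naturally: it is the quasi-stationary distribution of the birth-and-death process with rates $(\tfrac{\nu}{2},1+\tfrac{\nu}{2})$ (note $1 - (\nu/2)/(1+\nu/2) = 2/(2+\nu)$), and conditioning on survival at the later time $t=1$ forces convergence to this QSD at earlier times. The one-dimensional Yaglom-type results of \cite{KevKub} furnish the base case of this computation.

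The main obstacle is tightness in the Skorokhod space. Since $X_{A(nt)}$ is an integer-valued process whose limit $Z(\log t)$ has both up- and down-jumps, the monotonicity shortcut available for classical Poisson-type approximations (\cite[Section 3.12]{Billingsley}) is unavailable. I would verify Aldous' criterion: for any sequence of bounded stopping times $\tau_n \leq \tau_n' \leq (\tau_n+\delta_n)\wedge T$ with $\delta_n \downarrow 0$ (adapted to the natural filtration of $(X_{A(nt)})$), one needs $X_{A(n\tau_n')} - X_{A(n\tau_n)} \to 0$ in conditional probability. By the strong Markov property applied at $\tau_n$, on $\{X_{A(n\tau_n)} = k\}$ this reduces to controlling the variation of $k$ independent copies of the BPVE over $O(n\delta_n)$ generations, which in turn is handled by second-moment estimates built from (C2)--(C3). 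The f.d.d.\ step already yields tightness of the one-dimensional marginals, so truncating at large $k$ reduces the question to a finite-state estimate.

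The delicate point, and the step I expect to absorb most of the technical work, is to carry these estimates \emph{uniformly in} $n$ under the global conditioning $\{X_{A(n)} > 0\}$, which couples the dynamics across the entire time interval $[\varepsilon,1]$. A size-biasing representation, or equivalently a Doob $h$-transform of the BPVE by the (non-extinction) harmonic function $h_m(k) = \p(X_n > 0 \mid X_m = k)$, should decouple this dependence and allow the Aldous estimate to be pushed through; implementing this carefully is, in my view, the central technical content of the proof.
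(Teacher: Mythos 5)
Your plan for the finite-dimensional distributions is essentially the paper's: the transition laws come from the asymptotics of $1-f_{A(nu),A(nt)}(s)$ (the paper organizes the iteration through Kersting's shape function \eqref{eq:shape} rather than a raw Taylor expansion, but the content is the same), and the geometric law appears via the Yaglom-type Theorem 1 of \cite{KevKub} together with $\p(X_{A(ns)}>0)/\p(X_{A(n)}>0)\to s^{-1}$. For tightness you propose Aldous' criterion, whereas the paper verifies Billingsley's Theorem 16.8 by bounding the probability of $N$ jumps and of three close jumps using $\p(X_{k+1}\neq X_k\mid X_k=x)\le c\,x\,|1-\fl_{k+1}|$ and $\sum_{k=A(n\varepsilon)}^{A(nm)}|1-\fl_k|\to\log(m/\varepsilon)$; that substitution is legitimate in principle and would rest on the same per-generation jump estimate.

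However, two steps you lean on are not in place. First, the issue you yourself single out as the crux --- carrying the estimates under the global conditioning on $\{X_{A(n)}>0\}$ --- is left unexecuted, and the Doob $h$-transform you propose is unnecessary: since $\{X_{A(n)}>0\}\subset\{X_{A(n\varepsilon)}>0\}$ and $\p(X_{A(n\varepsilon)}>0)/\p(X_{A(n)}>0)\to\varepsilon^{-1}$, one has
\[
\limsup_{n\to\infty}\p(F\mid X_{A(n)}>0)\le\varepsilon^{-1}\limsup_{n\to\infty}\p(F\mid X_{A(n\varepsilon)}>0)
\]
for every event $F$, which reduces tightness to conditioning at the initial time only, where no coupling with the future occurs. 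Without this observation (or your $h$-transform actually implemented), the tightness argument is incomplete. Second, ``truncating at large $k$'' does not follow from tightness of the one-dimensional marginals: your reduction at the stopping time $\tau_n$ requires control of the running maximum $\max_{A(n\varepsilon)\le k\le A(nm)}X_k$, which is strictly stronger than boundedness of each $X_{A(nt)}$ in probability. The paper obtains it from a separate optional-stopping/first-moment argument, namely $\E[X_{A(nm)}\ind_{\{\tau\le A(nm)\}}\mid X_{A(n\varepsilon)}>0]\ge (K/C_0)\,\fl_{A(n\varepsilon),A(nm)}\,\p(\max_k X_k>K\mid X_{A(n\varepsilon)}>0)$, leading to \eqref{eq:X<K}. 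You need an analogous maximal inequality; asserting it from marginal tightness is a gap.
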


Note that if $\nu = 0$, $Z(u)$ is a simple death process with death rate $1$ and initial distribution $Z( \log \varepsilon ) = 1$.

Since the limiting process $(Z(\log t))_{t \geq \varepsilon}$ 
is conditioned on non-extinction in $t=1$, 
it is no surprise that the limit behaves differently for $t\in[\varepsilon, 1)$ and $t\geq 1$. In fact, for $t\geq 1$, the limit reduces to $(Z(\log t))_{t\geq 1}$ with 
initial distribution $\textrm{Geom}(\tfrac{2}{2 + \nu})$.

Note that Theorem \ref{thm:fdd>1} holds for any $\varepsilon > 0$, however we cannot 
take $\varepsilon = 0$. To see the behavior near 0, one has to apply a time reversal, and the continuous mapping theorem.
First, we need some properties of the limit process.

For $a \in [0,1]$, $\nu \geq 0$, introduce the notation
\begin{equation*} \label{eq:def-h}
h_a(s) = 1 - a \left( \frac{1}{1-s} + \frac{\nu}{2} ( 1-a) \right)^{-1},
\end{equation*}
that is, $h_a$ is the generating function of a linear fractional 
distribution with mean $a$. We stress that $h_a$ does depend on both $a$ and $\nu$, 
however the latter parameter is fix, therefore we suppress in the notation.
Note that we also allow the degenerate case $\nu=0$, when $h_a(s)$ is the g.f.~of a Bernoulli distributed variable with parameter $a$.
Recall the important property of 
linear fractional distributions that 
\begin{equation*} \label{eq:linfrac-conv}
h_a ( h_b(s)) = h_{ab}(s), \quad \forall a,b \in [0,1].
\end{equation*}
With this notation $F$ defined in \eqref{eq:db-genfnc} can be expressed as
$F(s,t) = h_{e^{-t}}(s)$.

Let  us define the time-changed process
\begin{equation} \label{eq:defU}
U(t) = Z(\log t).
\end{equation}
Then 
for $0 < u < t$, and $x_0 \in \{1, 2, \ldots \}$
\begin{equation} \label{eq:U-cgf}
\begin{split}
\E \left[ s^{{ U}(t)} | { U}(u) = x_0 \right] & = 
\E \left[ s^{Z(\log t)} | Z(\log u) = x_0\right] \\
& = \left( \E \left[ s^{Z(\log t)} | Z(\log u) = 1 \right] \right)^{x_0} \\
& = \left( h_{e^{-(\log t - \log u)}}(s) \right)^{x_0} =
\left( h_{u/t} (s) \right)^{x_0}.
\end{split}
\end{equation}

We analyze the limiting process defined in \eqref{eq:defU}.
Put $p = \tfrac{2}{2+\nu}$, $q = 1-p$.
Define the generating function
\begin{equation*} \label{eq:def-g}
g_t(s) 
= \frac{ps}{1-sq} \, 
\frac{t^{-1} (1 - h_{t}(0) )}
{1 - h_{t}(0) q s}
= \sum_{x=1}^\infty 
p q^{x-1} t^{-1} ( 1 - h_{t}(0)^x) s^x.
\end{equation*}
Note that $t^{-1} (1 - h_{t}(0)) = \tfrac{p}{1 - q t}
\to p$, as $t \downarrow 0$.
Furthermore, $g_t$ is the generating function of
$V + W_t$, where $V \sim \text{Geom}(p)$,
$W_t + 1 \sim \text{Geom}( \tfrac{p}{1 - q t})$,
and $V$ and $W_t$ are independent.
Let $\p_\varepsilon$ denote the law of the process 
$(U(t)_{t \in [\varepsilon, 1]})$
under the 
initial distribution 
$U(\varepsilon) \sim \mathrm{Geom}(p)$.

The distribution $\mathrm{Geom}(p)$ is the 
extremal quasi-stationary distribution of the birth-and-death 
process $Z$; see Collet et al.~\cite[p.106]{ColletMartinez}.
This means that 
\[
\E_\varepsilon [ s^{U(1)} | U(1) > 0] = \frac{ps}{1-qs}
= \E_\varepsilon (s^{U(\varepsilon)}),
\]
i.e.~conditioned on nonextinction the distribution of $U(1)$ is the same 
as the initial distribution.
The second statement below is an extension of the quasi-stationary
property, which corresponds to $t = 1$.

\begin{lemma} \label{lemma:condU}
For $0 < u < t \leq 1$ and $x_0 \in \N$
the generating function of the conditional transition 
probabilities is given by
\begin{equation} \label{eq:k-def}
\E \left[ s^{U(t)} | U(u) = x_0, U(1) > 0 \right]
= \frac{(h_{u/t}(s))^{x_0} - 
(h_{u/t}(h_{t} (0) s))^{x_0} }
{1 - (h_{u}(0))^{x_0}} =: k_{u,t;x_0}(s).
\end{equation}
The family of laws with generating functions
$(g_t)_{t \in (0,1]}$ is an entrance law for 
the transition generating functions $k_{u,t;x}$, that is, 
for $\varepsilon \in (0,1]$ and for any $t \in [\varepsilon, 1]$,
\[
\E_\varepsilon \left[ s^{U(t)} | U(1) > 0 \right] = g_t(s).
\]
\end{lemma}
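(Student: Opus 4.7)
The plan for both parts is a direct generating-function computation based on the Markov property of $U$, the transition formula \eqref{eq:U-cgf}, and the composition identity $h_a \circ h_b = h_{ab}$.

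For \eqref{eq:k-def}, I would decompose
\[
\E[s^{U(t)} \ind_{\{U(1) > 0\}} \mid U(u) = x_0] = \E[s^{U(t)} \mid U(u) = x_0] - \E[s^{U(t)} \ind_{\{U(1) = 0\}} \mid U(u) = x_0].
\]
Conditioning the second term on $U(t)$ and using $\p(U(1) = 0 \mid U(t) = y) = (h_t(0))^y$ (read off \eqref{eq:U-cgf} at $s = 0$) rewrites it as $\E[(s h_t(0))^{U(t)} \mid U(u) = x_0] = (h_{u/t}(s h_t(0)))^{x_0}$, while the first term equals $(h_{u/t}(s))^{x_0}$ by \eqref{eq:U-cgf}. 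Specializing the resulting identity at $s = 1$ and invoking $h_{u/t}(h_t(0)) = h_u(0)$ yields $\p(U(1) > 0 \mid U(u) = x_0) = 1 - (h_u(0))^{x_0}$, and the ratio is precisely $k_{u,t;x_0}$.

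For the entrance law I mix the identity just obtained against $U(\varepsilon) \sim \text{Geom}(p)$, collapsing the geometric sum through $\sum_{x \ge 1} p q^{x-1} z^x = p z / (1 - q z)$. This reduces everything to two evaluations of $\phi(z) := p h_{\varepsilon/t}(z) / (1 - q h_{\varepsilon/t}(z))$, at $z = s$ and $z = s h_t(0)$. The crucial algebraic step is the identity
\[
\phi(z) = 1 - \frac{\varepsilon(1 - z)}{t(1 - q z)},
\]
which follows by inserting the explicit linear-fractional form of $h_a$ and using $p \cdot \tfrac{\nu}{2} = q$ to collapse the denominator $p X + q a$ (with $a = \varepsilon/t$ and $X = (1-z)^{-1} + \tfrac{\nu}{2}(1 - a)$) to $(1 - q z)/(1 - z)$. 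Subtracting $\phi(s h_t(0))$ from $\phi(s)$ then leaves a numerator that simplifies (via $s(1-q)(1-h_t(0)) = sp(1-h_t(0))$) to $s p (1 - h_t(0)) \varepsilon / t$ over the denominator $(1 - q s)(1 - q s h_t(0))$. Setting $s = 1$ and using $1 - h_t(0) = t/(1 + \tfrac{\nu}{2}(1 - t))$ identifies $\p_\varepsilon(U(1) > 0) = \varepsilon$, and dividing the two expressions recovers exactly $g_t(s)$. The only non-routine ingredient is the displayed identity for $\phi$; this is precisely where the quasi-stationarity of $\text{Geom}(p)$ for $Z$ enters, and once it is in hand the rest is mechanical.
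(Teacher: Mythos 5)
Your proposal is correct and follows essentially the same route as the paper: the Markov property plus \eqref{eq:U-cgf} for the conditional transition generating function, and then mixing against $\mathrm{Geom}(p)$ with a geometric summation for the entrance law. Your key identity $\phi(z)=1-\tfrac{\varepsilon(1-z)}{t(1-qz)}$ is exactly the paper's identity \eqref{eq:h-iden} with $a=\varepsilon/t$ rewritten using $1+\tfrac{\nu}{2}=p^{-1}$ and $q/p=\tfrac{\nu}{2}$, so the two arguments coincide up to algebraic presentation.
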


Entrance laws for homogeneous Markov chains were treated 
by Chung \cite[Definition I.1.1]{Chung} and 
by Rogers and Williams \cite[III.5.39]{RogersWilliams}. 
In \cite{RogersWilliams} the existence of a c\`adl\`ag 
Markov process on $(0,\infty)$ with the given transition
probabilities and entrance laws were proved.
In the general non-homogeneous setup the problem was 
investigated by Kolmogorov \cite{Kolmogorov}
(for the English translation see \cite{Kolmogorov2})
already in 1936, where entrance laws are called 
absolute probabilities. 
The existence of a Markov process on $(0,1]$ follows
simply from Kolmogorov's consistency theorem, while 
the path property follows after time reversal.

\begin{corollary} \label{cor:limit}
There exists a c\`adl\`ag
Markov process $(\widetilde U(t))_{t \in (0,1]}$
such that its transition probabilities are given by the 
generating functions 
\[ 
k_{u,t;x}(s) = \E [ s^{\widetilde U (t)} | \widetilde U(u) = x],
\]
and $\E (s^{\widetilde U(t)}) = g_t(s)$ for each $t \in (0,1]$.
\end{corollary}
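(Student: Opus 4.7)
The plan is to construct $\widetilde U$ on $(0,1]$ by first invoking Kolmogorov's extension theorem to obtain a process with the prescribed finite-dimensional distributions, and then producing a c\`adl\`ag modification by patching together c\`adl\`ag versions on $[1/n,1]$ for $n \in \N$.

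First, I would verify that the transitions with generating functions $k_{u,t;x}(s)$ satisfy the Chapman--Kolmogorov relations on $0 < u < v < t \leq 1$. Algebraically, this can be read off from \eqref{eq:k-def} together with the composition identity $h_a \circ h_b = h_{ab}$; conceptually, $k_{u,t;x}$ is simply the Doob $h$-transform of the transition kernel of the Markov process $U(t) = Z(\log t)$ through the harmonic function $x \mapsto 1 - h_{\cdot}(0)^x = \p(U(1) > 0 \mid U(\cdot) = x)$, which automatically preserves the Markov property. Combined with the entrance law identity in Lemma~\ref{lemma:condU}, which states that for any $0 < u \leq t \leq 1$ the mass function associated to $g_t$ is obtained from the one associated to $g_u$ by applying the kernel $k_{u,t;\cdot}$, the prescribed finite-dimensional distributions form a consistent projective family on $(0,1]$. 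Kolmogorov's extension theorem then yields a Markov process $\widetilde U$ on $(0,1]$ with the stated transitions and one-dimensional marginals $g_t$.

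It remains to produce a c\`adl\`ag version. For each $\varepsilon \in (0,1]$, Lemma~\ref{lemma:condU} identifies the law of the restriction $\widetilde U \vert_{[\varepsilon,1]}$ with the law of the time-changed birth-and-death process $(Z(\log t))_{t \in [\varepsilon,1]}$ started from $Z(\log \varepsilon) \sim \mathrm{Geom}(p)$ and conditioned on the positive-probability event $\{Z(0) > 0\}$. Since $Z$ admits a c\`adl\`ag version on any compact time interval and conditioning on a positive-probability event preserves path regularity, $\widetilde U \vert_{[\varepsilon,1]}$ has a c\`adl\`ag modification. Applying this with $\varepsilon = 1/n$ for each $n \in \N$ and invoking the consistency established above, these modifications can be coupled on a single probability space to agree almost surely on overlaps, and their union furnishes a c\`adl\`ag version of $\widetilde U$ on $(0,1] = \bigcup_n [1/n, 1]$. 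The main obstacle is this last patching step: one must verify that the c\`adl\`ag modifications chosen on $[1/n,1]$ and $[1/(n+1),1]$ are indistinguishable on the common subinterval, which in turn follows from the uniqueness (up to indistinguishability) of c\`adl\`ag modifications of a discrete-state Markov chain determined by its finite-dimensional distributions.
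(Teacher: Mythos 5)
Your proposal is correct in substance, but it handles the one genuinely delicate step --- extending the c\`adl\`ag version from the compact intervals $[\varepsilon,1]$ down to all of $(0,1]$ --- by a different device than the paper. You glue c\`adl\`ag modifications of the Kolmogorov-constructed process on the nested intervals $[1/n,1]$, appealing to the fact that two c\`adl\`ag modifications of the same process on the same probability space are indistinguishable; for this to be airtight you should say explicitly that each $V^{(n)}$ is a \emph{modification on the same probability space} of the single process produced by the extension theorem (your phrase ``coupled on a single probability space'' blurs the distinction between modifications and mere versions, and indistinguishability fails for the latter). The paper instead performs a time reversal: it introduces $\overline U(s)=\widetilde U(1-s-)$, a Markov chain on $[0,1)$ started from the fixed initial law $\mathrm{Geom}(\tfrac{2}{2+\nu})$ with explicitly computed reversed transition probabilities \eqref{eq:tran-overline}, builds it forward by concatenating the process over $[1-n^{-1},1-(n+1)^{-1}]$ via the Markov property, and then reverses back. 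The reversal converts the problem of approaching the open endpoint $0$ into the standard problem of constructing a chain forward from a given initial distribution, at the cost of computing the reversed kernel and handling the left-limit bookkeeping in the reversal; your route avoids both and is arguably more direct, while the paper's route has the side benefit that the reversed process reappears naturally in Corollary~\ref{cor:reverse}. Your observation that $k_{u,t;x}$ is the Doob $h$-transform of the kernel of $U$ through $x\mapsto \p(U(1)>0\mid U(\cdot)=x)$, so that Chapman--Kolmogorov and the Markov property come for free, is a clean justification of the consistency that the paper leaves implicit.
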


As we noted earlier
$\lim_{t \downarrow 0} g_t(s) = \tfrac{ps}{1-qs} \tfrac{p}{1-qs}$, that is 
$\widetilde U(t) \stackrel{\mathcal{D}}{\longrightarrow} V + W$ 
as $t \downarrow 0$ where $V, W$ are independent, 
$V, W + 1 \sim \textrm{Geom}(p)$. However, contrary to the homogeneous 
case,  see \cite[III.39]{RogersWilliams}, $\widetilde U(t)$ as $t \downarrow 0$
does not converge in probability. Indeed, short calculation proves that 
for $\alpha \in (0,1)$ as $t \downarrow 0$
\[
k_{\alpha t, t; x_0}(s) =
\frac{(h_\alpha(s))^{x_0} - (h_\alpha(h_t(0)s))^{x_0}}
{1 - (h_{\alpha t}(0))^{x_0}} \longrightarrow \alpha^{-1} s (h_{\alpha}'(s))^{x_0 -1},
\quad s \in [0,1],
\]
that is the transition probabilities around 0 converge to a 
nontrivial distribution.
\smallskip 

As a consequence of the existence result above and Theorem \ref{thm:fdd>1}, using time reversal and the continuous mapping theorem we obtain the limiting behavior on $(0,1]$.

\begin{corollary} \label{cor:reverse}
Under the assumptions of Theorem \ref{thm:fdd>1},
\[
\mathcal{L} ( (X_{A(\frac{n}{t})} )_{t \geq 1} | X_{A(n)} > 0 ) 
\stackrel{\mathcal{D}}{\longrightarrow} 
\mathcal{L} (({Z}(-\log t))_{t \geq 1} | Z(0) > 0), \quad
\text{as $n\to\infty$}.
\]
\end{corollary}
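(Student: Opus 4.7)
The strategy is to deduce the corollary from Theorem \ref{thm:fdd>1} by a time-reversal of the parameter via the map $t \mapsto 1/t$, together with the continuous mapping theorem. Fix $T > 1$ and set $\varepsilon := 1/T \in (0,1)$. Theorem \ref{thm:fdd>1} yields, conditional on $\{X_{A(n)} > 0\}$,
\[
(X_{A(ns)})_{s \in [\varepsilon, T]} \stackrel{\mathcal{D}}{\longrightarrow} (Z(\log s))_{s \in [\varepsilon, T]} \quad \text{in } D([\varepsilon, T]),
\]
where the limit is taken under $\{Z(0) > 0\}$. Since restriction to the subinterval $[\varepsilon, 1]$ is continuous in the Skorokhod topology, the analogous convergence persists in $D([\varepsilon, 1])$.

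Next I would introduce a time-reversal map $\Phi \colon D([\varepsilon, 1]) \to D([1, T])$ taking $y$ to a c\`adl\`ag representative of the composition $t \mapsto y(1/t)$; a natural choice is $\Phi(y)(t) := y\bigl((1/t)^-\bigr)$ for $t \in (1, T]$ and $\Phi(y)(1) := y(1)$, which is right-continuous in $t$ and has left limits. Up to the countable set of jump times (which does not affect laws on $D$), $\Phi\bigl((X_{A(ns)})_{s \in [\varepsilon, 1]}\bigr)$ agrees with $(X_{A(n/t)})_{t \in [1, T]}$. A standard check on the Skorokhod topology shows that $\Phi$ is continuous at every $y$ that is continuous at the endpoints $s=\varepsilon$ and $s=1$. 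The limit $(Z(\log s))_{s \in [\varepsilon, 1]}$ is a pure-jump Markov chain with absolutely continuous inter-jump times and so, a.s., has no jump at the fixed times $\varepsilon$ and $1$. Hence $\Phi$ is a.s.\ continuous at the limit law, and the continuous mapping theorem gives, conditional on $\{X_{A(n)} > 0\}$,
\[
(X_{A(n/t)})_{t \in [1, T]} \stackrel{\mathcal{D}}{\longrightarrow} (Z(-\log t))_{t \in [1, T]} \quad \text{in } D([1, T]).
\]

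Finally, letting $T \to \infty$ (equivalently $\varepsilon \downarrow 0$), the standard criterion for convergence in $D([1,\infty))$ via convergence on finite subintervals gives the stated conclusion. Note that the existence of a c\`adl\`ag version of the limit $(Z(-\log t))_{t \geq 1}$, which amounts to extending $Z(\log \cdot)$ to times in $(-\infty, 0]$, is precisely what Corollary \ref{cor:limit} supplies. The main obstacle is the continuity of $\Phi$: the Skorokhod topology is not invariant under time reversal in general, and $\Phi$ fails to be continuous at paths with jumps at the boundary of the reversal interval. The rescue is exactly that the limit law puts no mass on such exceptional paths, so the continuous mapping theorem applies.
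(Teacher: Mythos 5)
Your proposal is correct and follows essentially the same route as the paper: restrict the convergence of Theorem \ref{thm:fdd>1} to $D([\varepsilon,1])$ (justified because the limit a.s.\ has no jump at the fixed time $1$), apply the time-reversal map $t\mapsto y((1/t)-)$ together with the continuous mapping theorem, and then pass to $D([1,\infty))$ via the finite-interval criterion (Billingsley, Section 16, Lemma 3), which is exactly what the paper's auxiliary map $\psi_m$ implements. Your extra care in noting that the reversal map is only continuous at paths without jumps at the cut points, and that the limit law charges no such paths, matches the paper's (terser) justification.
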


\subsection{Nearly degenerate BPVEIs}\label{subsec:BPVE-immig}

Consider a BPVEI $(Y_n)_{n\in\N}$ defined in \eqref{eq:Y} and introduce the notation
\[
m_{n,k} \coloneqq \E[\varepsilon_n (\varepsilon_n - 1) \cdots (\varepsilon_n - k +1)], \quad k\in \N,
\]
for the factorial moments of the immigration in generation $n$.
In what follows, we suppose that $(Y_n)_{n\in\N}$ is a nearly degenerate BPVEI, that is, its offspring distribution satisfies (C1)--(C3). We further assume that one of the following conditions holds:
\begin{itemize}
\item[(C4)] $\lim_{n\to\infty, \fl_n < 1}\frac{m_{n,k}}{k!(1-\fl_n)} = \lambda_k$, 
$k = 1,2,\ldots, K$, for some $K \geq 2$, such that $\lambda_K=0$ and 
$\left(\frac{m_{n,k}}{k!|1-\fl_n|}\right)_{n}$ is bounded for each $k\leq K$,
\item[(C4')]  $\lim_{n\to\infty, \fl_n < 1} \frac{m_{n,k}}{k!(1-\fl_n)} = 
\lambda_k$, $k = 1,2,\ldots$ such that $\limsup_{n\to\infty}\lambda_n^{1/n} \leq 1$ and
$\left(\frac{m_{n,k}}{k!|1-\fl_n|}\right)_{n}$ is bounded for all $k$.
\end{itemize}

For BPVEIs the limiting processes are \emph{continuous time branching processes with immigration}, defined as follows. 
Each individual has an exponentially distributed lifetime with 
parameter $\alpha$, and upon death, it leaves a random number of offspring $\xi$. 
Furthermore, random number of immigrants $\varepsilon$ arrive at random times determined by a Poisson process of intensity $\beta$. 
All the quantities involved are independent.
The offspring and immigrant generating functions are
\begin{equation*}\label{eq:f(s)}
f(s) = \sum_{k=0}^\infty \p(\xi = k) s^k, \quad 
h(s) = \sum_{k=1}^\infty \p (\varepsilon = k) s^k, \quad |s|\leq 1,
\end{equation*}
where we may and do assume that $\p (\xi = 1) = 0$ and 
$\p(\varepsilon = 0) = 0$.

When there is no immigration we obtain the well-known class of continuous 
time branching processes; see \cite[Chapter III]{AthreyaNey}.
In particular, a birth-and-death process is a special continuous time branching 
process with
\[
\alpha = 1 + \nu, \quad f(s) = \frac{1}{1 + \nu}
\left( 1 + \frac{\nu}{2} + \frac{\nu}{2} s^2 \right).
\]
Continuous time branching processes with immigration are less studied, though they 
were introduced by Sevastyanov \cite{Sevastyanov} in 1957. 
Simple birth-and-death processes with immigration are discussed in detail by 
Allen \cite[Section 6.4.4]{Allen}, where immigrants arrive one by one.
For some recent references see Pakes, Chen, and Li \cite{LiChenPakes}, and Li and Li \cite{LiLi}.

Let $(W(t))_{t\geq - w}$ denote the resulting continuous time branching process with immigration on the time interval $t \in [-w, \infty)$, $w \geq 0$,
and introduce the notation $G(s,t) = \E(s^{W(t+u)} | W(u) =0)$, $t \geq 0$,
$s \in [0,1]$. Then $G(s,t)$ satisfies the Kolmogorov forward equation (see 
(2.5) in \cite{LiChenPakes}),
\begin{equation*}\label{eq:Kolmogorov-MBI}
\frac{\partial}{\partial t} G(s,t) = 
a(s) \frac{\partial}{\partial s} G(s,t) + b(s) G(s,t)
\end{equation*}
with boundary condition
\[
G(s,0) = 1,
\]
where
\begin{equation} \label{eq:def-ab}
\begin{split}
a(s) & =  \alpha \left(f(s) - s\right), \\
b(s) & = \beta \left(h(s) - 1\right).
\end{split}
\end{equation}

\begin{theorem}\label{thm:conv-immig}
Assume that the BPVEI $(Y_n)_{n\in\N}$ defined in \eqref{eq:Y} satisfies the conditions (C1)--(C3) and (C4) or (C4') holds. Then, for any $0<\varepsilon \leq 1$,
\[
\mathcal{L} ((Y_{A(nt)})_{t \geq  \varepsilon}) 
\stackrel{\mathcal{D}}{\longrightarrow} \mathcal{L} ((W(\log t))_{t \geq \varepsilon}),
\]
as $n\to \infty$, 
where $(W(u))_{u\geq \log \varepsilon}$ is a continuous time branching 
process with immigration with initial distribution having g.f.,
\[
\log f_Y(s) 
= 
\begin{cases}
-\sum_{k=1}^{\kappa} 
\frac{2^k\lambda_k}{\nu^k} \left( \log\left( 
1+\frac{\nu}{2}(1-s)\right) +
\sum_{i=1}^{k-1}\frac{\nu^i}{i2^i}
(s-1)^i\right), & \nu > 0, \\
\sum_{k=1}^{\kappa} \frac{\lambda_k}{k} (s-1)^k, & \nu = 0,
\end{cases}
\]
with parameters $\alpha = 1+\nu$, $\beta = \sum_{k=1}^{\kappa}(-1)^{k+1}\lambda_k$,
and
\[
\begin{split}
f(s) & = (1+\nu)^{-1} \left(1+\frac{\nu}{2} + \frac{\nu}{2}s^2\right), \\
h(s) & = 1 + \beta^{-1} \sum_{k=1}^{\kappa} \lambda_k (s-1)^k,
\end{split}
\]
offspring and immigration g.f.'s, where $\kappa=K-1$ or $\kappa=\infty$ depending on whether condition (C4) or (C4') holds. The transition generating function of $W$ is given by
\begin{equation*} \label{eq:G-def}
G_y(s, t) = \E \left[ s^{W(t + u)} | W(u) = y \right] = 
\frac{f_Y(s)}{f_Y(h_{e^{-t}}(s))} \left( h_{e^{-t}}(s) \right)^y.
\end{equation*}
\end{theorem}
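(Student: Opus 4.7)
The proof follows the standard two-step pattern: convergence of finite-dimensional distributions (fdd) plus tightness in the Skorokhod space $D([\varepsilon,\infty))$. Both parts lean on the one-dimensional limit theorem from \cite{KevKub}, which in generating-function form reads
\begin{equation*}
\prod_{k=1}^{A(n)} h_k\bigl(f_{k,A(n)}(s)\bigr) \longrightarrow f_Y(s), \qquad s \in [0,1],
\end{equation*}
where $f_{k,m}(s) := f_{k+1}(f_{k+2}(\cdots f_m(s)))$ with $f_{m,m}(s):=s$, and $h_k$ denotes the g.f.\ of $\varepsilon_k$. I also appeal to the companion statement, implicit in the analysis underlying Theorem \ref{thm:fdd>1}, that for any $0 < u < t$,
\begin{equation*}
f_{A(nu),A(nt)}(s) \longrightarrow h_{u/t}(s), \qquad s \in [0,1].
\end{equation*}
Both identities express that the iterated nearly degenerate offspring g.f.'s are linear fractional in the limit.

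\textbf{Finite-dimensional distributions.}
Since both $(Y_n)$ and $W$ are Markov, fdd convergence reduces to (i) convergence of the one-dimensional marginal at one reference time and (ii) convergence of the transition g.f.'s. For (i), applying the one-dimensional limit with $n$ replaced by $nt$ yields that $Y_{A(nt)}$ converges in distribution to a random variable with g.f.\ $f_Y$, matching the stationary marginal of $W$. For (ii), the BPVEI recursion gives
\begin{equation*}
\E\bigl[s^{Y_{A(nt)}} \bigm| Y_{A(nu)} = y\bigr]
= \bigl(f_{A(nu),A(nt)}(s)\bigr)^{y} \prod_{k=A(nu)+1}^{A(nt)} h_k\bigl(f_{k,A(nt)}(s)\bigr).
\end{equation*}
The first factor tends to $(h_{u/t}(s))^y$ by the offspring composition limit recalled above. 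For the immigration factor, I split the full product $\prod_{k=1}^{A(nt)} h_k(f_{k,A(nt)}(s))$ at the index $A(nu)$, use the associativity $f_{k,A(nt)} = f_{k,A(nu)} \circ f_{A(nu),A(nt)}$, and identify (via the one-dimensional limit evaluated at the shifted argument $h_{u/t}(s)$) the truncated initial product with $f_Y(h_{u/t}(s))$ in the limit. Dividing yields that the tail product converges to $f_Y(s)/f_Y(h_{u/t}(s))$, and the full limit is exactly $G_y(s,\log(t/u))$. Iterating this through time points $t_1 < \cdots < t_\ell$ via the Markov property gives joint convergence.

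\textbf{Tightness.}
I would verify Aldous's criterion: for any sequence of stopping times $\tau_n \leq T$ adapted to the natural filtration of $(Y_{A(nt)})_t$, and any $\delta_n \downarrow 0$,
\begin{equation*}
Y_{A(n(\tau_n + \delta_n))} - Y_{A(n\tau_n)} \stackrel{\p}{\longrightarrow} 0.
\end{equation*}
By the strong Markov property and Chebyshev's inequality it suffices to control the conditional variance of this increment. A routine sequential-conditioning computation based on the BPVEI recursion, (C2), and (C4) or (C4') gives
\begin{equation*}
\mathrm{Var}\bigl(Y_m - Y_n \bigm| Y_n = y\bigr) \leq C \sum_{k=n+1}^{m} \bigl((y + k - n) |1-\fl_k| + |1-\fl_k|\bigr),
\end{equation*}
with an $O(1)$ factor absorbing $\fl$-products that stay bounded away from $0$ and $\infty$; by (C1) and \eqref{eq:A(n)}, the partial sum $\sum_{k=A(nu)+1}^{A(n(u+\delta))}(1-\fl_k) = \log(1+\delta/u) + o(1)$. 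Combined with uniform $L^1$ bounds on $Y_{A(n\tau_n)}$ inherited from the one-dimensional convergence and easy recursive moment estimates, this produces $\mathrm{Var}(Y_{A(n(\tau_n+\delta))} - Y_{A(n\tau_n)} \mid \mathcal{F}_{A(n\tau_n)}) = O(\delta)$ uniformly in $n$ on $\{\tau_n \geq \varepsilon\}$, which yields the criterion. Together with the marginal tightness provided by the fdd step, this gives tightness in $D([\varepsilon,T])$ for every $T$, and since $\varepsilon > 0$ is arbitrary, in $D((0,\infty))$.

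\textbf{Main obstacle.}
The fdd part is largely generating-function bookkeeping once the right decomposition is chosen. The genuine difficulty, already flagged in the introduction, is tightness. The limit is non-monotone with jumps, so the classical Poisson-type criteria (\cite[Section 3.12]{Billingsley}) are unavailable, and an oscillation bound must be established by hand. The delicate point is the uniform-in-$n$ moment control at arbitrary stopping times, complicated by the occasional supercritical generations permitted by the summability of $(\fl_n - 1)_+$ in (C1). Making the variance estimate truly uniform over stopping times, while keeping careful track of the scaling $A(nt)$, is where I expect the main technical effort to concentrate.
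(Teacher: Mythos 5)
Your finite-dimensional part is essentially the paper's argument: the paper's Lemma \ref{lemma:funclimit-bev} uses exactly the decomposition $g_{j,n}(s)=\prod_{l=j+1}^n h_l(f_{l,n}(s))=g_n(s)/g_j(f_{j,n}(s))$ together with the one-dimensional limit of \cite{KevKub} and the offspring limit $f_{A(nu),A(nt)}(s)\to h_{u/t}(s)$ (Lemma \ref{lemma:funclimit1}), yielding the transition g.f.\ $f_Y(s)/f_Y(h_{u/t}(s))\,(h_{u/t}(s))^y$. No issues there.

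For tightness you take a genuinely different route (Aldous's criterion with a conditional second-moment bound, versus the paper's verification of Billingsley's $w_m'$ criterion by first restricting to the event $\{\max_k Y_k\le K\}$ and then counting jumps via $\p(Y_{k+1}\ne Y_k\mid Y_k=y)\le cK|1-\fl_{k+1}|$ and the estimate $\sum_{k=A(nu)}^{A(n(u+\delta))}|1-\fl_k|\approx\log(1+\delta/u)$). Your route is viable in principle, but as written it has a genuine gap: the displayed variance bound contains the term $\sum_{k=n+1}^{m}(k-n)|1-\fl_k|$, and this is \emph{not} $O(\delta)$ uniformly in $n$ over the window $n=A(nu)$, $m=A(n(u+\delta))$. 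In the model example $\fl_k=1-\alpha/k$, $A(n)\sim cn^{1/\alpha}$, that sum behaves like $\alpha\bigl(A(n(u+\delta))-A(nu)\bigr)-\alpha A(nu)\log\bigl(A(n(u+\delta))/A(nu)\bigr)\sim \tfrac{1}{2\alpha}A(nu)(\delta/u)^2\to\infty$ as $n\to\infty$. The factor $y+k-n$ apparently comes from bounding $m_{j,1}$ by a constant; under (C4)/(C4') you have $m_{j,1}\le c|1-\fl_j|$, so $\E[Y_k\mid Y_n=y]\le C(y+1)$ uniformly on the window, and the variance bound should read $C(y+1)\sum_{k=n+1}^m|1-\fl_k|=O((y+1)\delta)$. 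With that correction the Chebyshev step goes through, so the flaw is repairable, but the step fails as stated.

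A second, smaller gap: you assert that uniform $L^1$ (or $L^2$) control of $Y_{A(n\tau_n)}$ at stopping times is ``inherited from the one-dimensional convergence.'' Convergence in distribution at fixed times gives no control at random times; you need a maximal-type estimate such as the paper's \eqref{eq:Y>K}, which is proved by an optional-stopping argument comparing $\E[Y_{A(nm)}\ind_{\{\tau\le A(nm)\}}]$ with $K\fl_{\tau,A(nm)}/C_0$. Something of this kind has to be supplied before the Aldous criterion can be closed; you correctly identify this as the hard point but do not carry it out.
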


Contrary to the conditional setup, the limiting process is stationary. Indeed,
\[
\begin{split}
\sum_{k=0}^\infty \E [ s^{W(t)} | W(\log \varepsilon) = k] f_Y[k] 
& = 
\sum_{k=0}^\infty \frac{f_Y(s)}{f_Y(h_{e^{-(t-\log \varepsilon)}}(s))} 
\left( h_{e^{-(t-\log \varepsilon)}}(s) \right)^{k} 
f_Y[k] \\
& = \frac{f_Y(s)}{f_Y(h_{e^{-(t-\log \varepsilon)}}(s))} 
f_Y(h_{e^{-(t-\log \varepsilon)}}(s)) = f_Y(s).
\end{split}
\]


If $K=2$ the limiting process is a simple birth-and-death process with immigration with birth, death, and immigration rates
$\frac{\nu}{2}$, $\left(1+\frac{\nu}{2}\right)$, $\lambda_1$, respectively;
see \cite[Section 6.4.4]{Allen}.


Similarly to Theorem \ref{thm:fdd>1}, if $\nu = 0$, reproduction occurs according to a simple death process with parameter $\mu = 1$.

As in the conditional setup, in the limit theorem we cannot choose 
$\varepsilon = 0$. To understand the behavior near 0, we need time reversal.

\begin{corollary} \label{corr:imm}
Under the assumptions of Theorem \ref{thm:conv-immig},
\[
\mathcal{L} ((Y_{A(\frac{n}{t})})_{t \geq 1}) 
\stackrel{\mathcal{D}}{\longrightarrow} \mathcal{L}((W(-\log t))_{t \geq 1}), \quad
\text{as $n\to \infty$}.
\]
\end{corollary}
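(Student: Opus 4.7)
The argument parallels that of Corollary \ref{cor:reverse}: apply time reversal and the continuous mapping theorem to Theorem \ref{thm:conv-immig}. The situation here is in fact simpler because the limit process $(W(u))_{u \in \R}$ is stationary (as noted in the remark following Theorem \ref{thm:conv-immig}), so it admits a two-sided c\`adl\`ag extension and $(W(-\log t))_{t \geq 1}$ is well-defined without the need for an analog of Corollary \ref{cor:limit}.

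First, I would extend Theorem \ref{thm:conv-immig} from $[\varepsilon, \infty)$ to the open interval $(0, \infty)$. Applying the theorem along a sequence $\varepsilon_k \downarrow 0$ and combining via a diagonal argument (using that the restriction map $D([\varepsilon_k, 1]) \to D([\varepsilon_{k+1}, 1])$ is continuous), one obtains
\[
(Y_{A(nt)})_{t \in (0,1]} \stackrel{\mathcal{D}}{\longrightarrow} (W(\log t))_{t \in (0,1]},
\]
understood via convergence on every compact sub-interval $[\varepsilon, 1] \subset (0,1]$.

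Next, I would introduce the time-reversal map $\Phi : D((0,1]) \to D([1,\infty))$ which sends a path $x$ to the function $t \mapsto x((1/t)-)$ for $t > 1$, with $\Phi(x)(1) = x(1)$. The left limit is taken because composing a c\`adl\`ag path with the decreasing map $t \mapsto 1/t$ swaps left- and right-continuity; the correction restores the c\`adl\`ag property on $[1,\infty)$. Since the prelimit processes $(Y_{A(nt)})$ and the limit $(W(\log t))$ are pure jump processes on $\N \cup \{0\}$ with almost surely finitely many jumps on any compact sub-interval of $(0,1]$, the map $\Phi$ is continuous at the limit path almost surely. The continuous mapping theorem then yields
\[
\Phi\bigl((Y_{A(nt)})_{t \in (0,1]}\bigr) \stackrel{\mathcal{D}}{\longrightarrow} \Phi\bigl((W(\log t))_{t \in (0,1]}\bigr),
\]
and the substitution $s = 1/t$ on both sides is exactly the statement of the corollary.

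The main obstacle is the bookkeeping around the Skorokhod topology under time reversal---verifying that $\Phi$ is a.s.~continuous at the limit path and that the image paths, both prelimit and limit, genuinely sit in $D([1,\infty))$ with the standard Skorokhod metric, with jumps at locations compatible with right-continuity after the chosen left-limit correction. Once this is in place, the corollary follows from the continuous mapping theorem applied to the extended version of Theorem \ref{thm:conv-immig}.
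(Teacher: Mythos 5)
Your proposal follows essentially the same route as the paper: restrict the convergence of Theorem \ref{thm:conv-immig} to compact intervals $[\varepsilon,1]$ (using a.s.\ continuity of the limit at $t=1$), apply the time-reversal map $x \mapsto x\left(\frac{1}{t}-\right)$ together with the continuous mapping theorem, and reduce convergence in $D([1,\infty))$ to convergence on compacts (which the paper formalizes via the taper maps $\psi_m$ and Lemma 3 of Section 16 in \cite{Billingsley}). This is correct and matches the paper's argument, which simply reuses the proof of Corollary \ref{cor:reverse} verbatim.
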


\section{Proofs}\label{sec:proofs}

\subsection{Preparation}

Recall that $f_n(s) = \sum_{k=0}^\infty f_n[k] s^k = \E (s^{\xi_n})$ denotes the offspring
g.f.~in generation $n$. We introduce the notations,
\[
f_{n,n}(s) = s, \quad
f_{j,n}(s) \coloneqq f_{j+1} \circ \dots \circ f_n(s),
\]
for $j <n$, put $\fl_n = \E (\xi_n)$, for the offspring mean and let
\[
\fl_{n,n} = 1, \quad \fl_{j,n} \coloneqq \fl_{j+1} \cdots \fl_n,
\]
for $j<n$.

We analyze the g.f.~of the underlying processes. For a given g.f.~$f$
with mean $\fl$ and $f''(1) < \infty$ define the \emph{shape function},
see e.g.~\cite{Kersting}, as
\begin{equation} \label{eq:def-shape}
\varphi(s) = \frac{1}{1-f(s)} - \frac{1}{\fl(1-s)}, \quad 0\leq s\leq 1,
\quad \varphi(1) = \frac{f''(1)}{2\fl^2}.
\end{equation}
Let $\varphi_j$ be the shape function of $f_j$. Using the definition of the shape function, it is easy to check that
\begin{equation}\label{eq:shape}
\frac{1}{1 - f_{j,n} (s)} = 
\frac{1}{\fl_{j,n}(1 - s)} + \varphi_{j,n}(s), 
\quad \varphi_{j,n}(s)\coloneqq\sum_{k=j+1}^{n} 
\frac{\varphi_k(f_{k,n}(s))}{\fl_{j,k-1}}.
\end{equation}

The following lemmas are frequently used in our proofs. The first one is a consequence of the Silverman--Toeplitz theorem, while the second one follows from a straightforward calculation. For their proofs we refer to \cite{KevKub}.

\begin{lemma}[{\cite[Lemma 2]{KevKub}}] \label{lemma:anjk}
Let $(\fl_n)_{n\in\N}$ be a sequence of positive real numbers satisfying (C1), 
and define 
\[
a_{n,j}^{(k)}=(1-\fl_j)\prod_{i=j+1}^{n}\fl_i^k
= (1 - \fl_j) \fl_{j,n}^k, \quad n,j,k\in\N, \quad j\leq n-1,
\]
and $a_{n,n}^{(k)} = 1 - \fl_n$.
If $(x_n)_{n\in\N}$ is bounded and 
$\lim_{n\to\infty, \fl_n < 1} x_n = x \in \R$, then for all $k\in\N$,
\begin{equation*} \label{eq:x-conv}
\begin{split}
\lim_{n \to \infty} \sum_{j=1}^n a_{n,j}^{(k)} x_j = \frac{x}{k}, \quad
\lim_{n \to \infty} \sum_{j=1}^n | a_{n,j}^{(k)}| x_j = \frac{x}{k}.
\end{split}
\end{equation*}
\end{lemma}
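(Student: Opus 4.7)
The plan is to deduce the lemma from the classical Silverman--Toeplitz theorem after isolating the correct normalization via a telescoping identity. First I would establish
\[
\sum_{j=1}^{n} (1-\fl_j^k)\,\fl_{j,n}^k \;=\; 1 - \fl_{0,n}^k,
\]
which follows from $\fl_j^k \fl_{j,n}^k = \fl_{j-1,n}^k$ and telescoping. Since (C1) yields $\fl_{0,n} \to 0$, the right-hand side tends to $1$. Writing $1-\fl_j^k = (1-\fl_j)\sum_{i=0}^{k-1} \fl_j^i$ and using $\fl_j \to 1$ suggests, at the heuristic level, that $\sum_j a_{n,j}^{(k)} \to 1/k$, matching the target value at $x_j \equiv 1$.

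Next I would split the sum according to the sign of $1-\fl_j$; set $J = \{j : \fl_j < 1\}$. From (C1) one obtains $\sum_j \log\max(1,\fl_j) \le \sum_j (\fl_j-1)_+ < \infty$, hence $\fl_{j,n}^k \le M^k$ for some constant $M$ uniformly in $j \le n$. If $B$ bounds $|x_j|$, then for $j \notin J$ one has $|1-\fl_j|\,\fl_{j,n}^k |x_j| \le B M^k (\fl_j-1)_+$, and since $\fl_{j,n} \to 0$ for each fixed $j$, a dominated convergence argument (first fix finitely many indices $j$, then use the tail bound $\sum_{j > N} (\fl_j-1)_+ < \varepsilon$) shows that $\sum_{j \notin J,\, j \le n} |1-\fl_j|\,\fl_{j,n}^k |x_j| \to 0$. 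Thus the $j \notin J$ contribution disappears from both the signed and the absolute-value sums.

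On $J$ the coefficients $a_{n,j}^{(k)}$ are nonnegative, so the signed and absolute-value sums agree there. Putting $c_{n,j} = (1-\fl_j)\bigl( \sum_{i=0}^{k-1} \fl_j^i \bigr) \fl_{j,n}^k$ for $j \in J$, the telescoping identity combined with the previous paragraph shows $\sum_{j \in J,\, j \le n} c_{n,j} \to 1$; moreover $c_{n,j} \to 0$ for each fixed $j$ and the row sums are bounded. Silverman--Toeplitz then applies: whenever $(y_j)_{j \in J}$ converges to some $y$ along $J$, one has $\sum_{j \in J,\, j \le n} c_{n,j} y_j \to y$. Taking $y_j = x_j / \sum_{i=0}^{k-1} \fl_j^i$, which tends to $x/k$ along $J$ because $\fl_j \to 1$ and $x_j \to x$ along $J$, yields $\sum_{j \in J,\, j \le n} a_{n,j}^{(k)} x_j \to x/k$, and combining with the previous step gives the signed claim.

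The main obstacle is precisely this sign issue: (C1) permits $\fl_j > 1$ infinitely often, so Silverman--Toeplitz cannot be invoked directly on the original array $a_{n,j}^{(k)}$. The summability $\sum (\fl_j-1)_+ < \infty$ is exactly what is needed to absorb those terms, and once they are shown to be negligible, the absolute-value statement follows at no extra cost, since on $J$ the coefficients are already positive and the same domination argument with $|x_j|$ in place of $x_j$ handles the complement of $J$.
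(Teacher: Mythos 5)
Your argument is correct: the telescoping identity $\sum_{j=1}^n (1-\fl_j^k)\fl_{j,n}^k = 1-\fl_{0,n}^k \to 1$, the absorption of the indices with $\fl_j>1$ via $\sum_j(\fl_j-1)_+<\infty$ together with $\fl_{j,n}\to 0$ for fixed $j$, and the application of Silverman--Toeplitz to the nonnegative array on $\{j:\fl_j<1\}$ all hold up, and they yield both the signed and absolute-value statements. This is essentially the paper's route, which presents the lemma precisely as a consequence of the Silverman--Toeplitz theorem (deferring details to the cited reference).
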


\begin{lemma}[{\cite[Lemma 3]{KevKub}}]\label{lemma:phi}
Let $\varphi_n$ be the shape function of $f_n$. Then
under the conditions (C1)--(C3) with $\nu > 0$,
\[
\lim_{n\to\infty, \fl_n < 1}
\sup_{s \in [0,1]}
\frac{|\varphi_n(1)-\varphi_n(s)|}{1-\fl_n} = 0,
\]
and the sequence 
$\sup_{s \in [0,1]} \frac{|\varphi_n(1)-\varphi_n(s)|}{|1-\fl_n|}$
is bounded.
\end{lemma}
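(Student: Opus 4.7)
The plan is to reduce the bound to a mean value theorem estimate on $\varphi_n'$, after finding the right algebraic form of the shape function. Writing $\varphi_n$ over a common denominator,
\[
\varphi_n(s) = \frac{\fl_n(1-s) - (1-f_n(s))}{\fl_n(1-s)(1-f_n(s))},
\]
I would first observe that the numerator $g_n(s) := \fl_n(1-s) - (1-f_n(s))$ factors as $g_n(s) = (1-s)^2 G_n(s)$, where $G_n$ has \emph{nonnegative} coefficients. This follows from the elementary identity $s^k - 1 + k(1-s) = (1-s)^2 \sum_{i=0}^{k-2}(k-1-i) s^i$ for $k\ge 0$, applied term by term to $f_n(s) = \sum_k f_n[k]s^k$. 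Since $1 - f_n(s) = \fl_n(1-s) - g_n(s) = (1-s)[\fl_n - (1-s)G_n(s)]$, cancelling one factor of $1-s$ gives the convenient form
\[
\varphi_n(s) = \frac{G_n(s)}{\fl_n\bigl[\fl_n - (1-s) G_n(s)\bigr]}.
\]
Reading off the Taylor expansion of $g_n$ at $s=1$ identifies $G_n(1) = f_n''(1)/2$, which recovers $\varphi_n(1) = f_n''(1)/(2\fl_n^2)$ as in \eqref{eq:def-shape}, and $G_n'(1) = f_n'''(1)/6$.

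The second step is a direct quotient rule calculation, yielding
\[
\varphi_n'(s) = \frac{\fl_n G_n'(s) - G_n(s)^2}{\fl_n\bigl[\fl_n - (1-s) G_n(s)\bigr]^2}.
\]
Because $G_n$ has nonnegative coefficients, $G_n$ and $G_n'$ are nondecreasing on $[0,1]$, so $G_n(s) \le G_n(1) = f_n''(1)/2$ and $G_n'(s) \le G_n'(1) = f_n'''(1)/6$ uniformly in $s\in[0,1]$. From (C2) we get $f_n''(1) = O(1-\fl_n) \to 0$; hence $(1-s)G_n(s)$ tends to $0$ uniformly in $s$, and the denominator $\fl_n[\fl_n - (1-s)G_n(s)]^2$ is bounded below by a positive constant for $n$ large. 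From (C3) with $\nu > 0$ we have $f_n'''(1) = o(1-\fl_n)$ along $\fl_n < 1$, while $G_n(s)^2 = O((1-\fl_n)^2)$. Combining these,
\[
\sup_{s\in[0,1]} |\varphi_n'(s)| = o(1-\fl_n) \quad \text{as } n\to\infty,\ \fl_n < 1,
\]
and the mean value theorem $|\varphi_n(1) - \varphi_n(s)| \le \sup_t |\varphi_n'(t)|$ yields the first assertion. The boundedness claim follows from the same estimate, using the boundedness parts of (C2)--(C3) to write $f_n''(1), f_n'''(1) = O(|1-\fl_n|)$ and noting that $|1-\fl_n|$ is itself bounded.

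The main nonroutine point is locating the factorization $g_n(s) = (1-s)^2 G_n(s)$ with $G_n$ having nonnegative coefficients. Without it, the pointwise bounds $G_n(s)\le G_n(1)$, $G_n'(s)\le G_n'(1)$ and the clean sign structure of $\varphi_n'$ are lost, and one would have to argue via Taylor remainders on $f_n$, which is cumbersome since we only control the second and third derivatives at $1$ and have no quantitative grip on higher ones. With the factorization in hand, everything else is bookkeeping of the orders of magnitude given by (C2) and (C3).
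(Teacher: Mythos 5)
Your argument is correct, and it cannot be compared line-by-line with the paper because the paper gives no proof of this lemma: it is quoted as Lemma~3 of \cite{KevKub} with the remark that it ``follows from a straightforward calculation''. What you supply is a clean, self-contained derivation. The key step --- writing the numerator of $\varphi_n$ as $\fl_n(1-s)-(1-f_n(s))=\sum_k f_n[k]\,(s^k-1+k(1-s))=(1-s)^2G_n(s)$ with $G_n$ having nonnegative coefficients, so that $\varphi_n(s)=G_n(s)/\bigl(\fl_n[\fl_n-(1-s)G_n(s)]\bigr)$ --- checks out (the identity $s^k-1+k(1-s)=(1-s)^2\sum_{i=0}^{k-2}(k-1-i)s^i$ is correct, and $G_n(1)=f_n''(1)/2$, $G_n'(1)=f_n'''(1)/6$), as does the quotient-rule formula $\varphi_n'(s)=\bigl(\fl_n G_n'(s)-G_n(s)^2\bigr)/\bigl(\fl_n[\fl_n-(1-s)G_n(s)]^2\bigr)$, whose numerator simplifies exactly as you claim. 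The monotonicity of $G_n$ and $G_n'$ then gives the uniform bounds, and (C2)--(C3) deliver $\sup_s|\varphi_n'(s)|=o(1-\fl_n)$ along $\fl_n<1$ and $O(|1-\fl_n|)$ in general, so the mean value theorem finishes both assertions. Two minor points worth recording: first, note that (C2)--(C3) guarantee $f_n''(1),f_n'''(1)<\infty$, so $G_n(1)$ and $G_n'(1)$ are finite and the sup of $G_n'$ on $[0,1)$ equals $G_n'(1)$ by monotone convergence, which is all the mean value theorem needs; second, for the boundedness claim your lower bound on the denominator is stated only ``for $n$ large'', but the finitely many remaining terms are harmless since $\fl_n-(1-s)G_n(s)=(1-f_n(s))/(1-s)\ge 1-f_n(0)>0$ for every fixed $n$ (as $\fl_n>0$), so each term of the sequence is finite. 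With these observations the proof is complete.
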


\subsection{Proofs of Subsection \ref{subsec:BPVE}}

\begin{lemma}\label{lemma:funclimit1}
Assume that condition (C1)--(C3) are satisfied for a BPVE $(X_n)_{n\in \N}$ defined in \eqref{eq:X}.
Then, for any $0 < u \leq t$,
\[
\lim_{n \to \infty}
\E\left[s^{X_{A(nt)}}  | X_{A(nu)} = 1 \right] =
1 - \frac{u}{t} \left(
\frac{1}{1-s} + \frac{\nu}{2} \left(1 - \frac{u}{t}\right)
\right)^{-1} = h_{u/t}(s),
\]
that is, $\mathcal{L}(X_{A(nt)} | X_{A(nu)} = 1 )$ converges 
in distribution to a linear fractional distribution with 
generating function $h_{u/t}(s)$.
\end{lemma}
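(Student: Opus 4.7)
The strategy is to exploit the branching property and the shape function identity \eqref{eq:shape}, reducing the claim to two asymptotic computations involving offspring means and second moments. By the branching property,
\[
\E[s^{X_{A(nt)}} \mid X_{A(nu)} = 1] = f_{A(nu), A(nt)}(s),
\]
and \eqref{eq:shape} gives
\[
\frac{1}{1 - f_{A(nu), A(nt)}(s)} = \frac{1}{\fl_{A(nu), A(nt)}(1-s)} + \varphi_{A(nu), A(nt)}(s).
\]
Since the target $h_{u/t}(s)$ corresponds to $(u/t)^{-1}\bigl[(1-s)^{-1} + (\nu/2)(1-u/t)\bigr]$ on the right, it suffices to prove that (i) $\fl_{A(nu), A(nt)} \to u/t$ and (ii) $\varphi_{A(nu), A(nt)}(s) \to (\nu/2)(t/u - 1)$ for each fixed $s \in [0,1)$.

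Step (i) follows at once from \eqref{eq:A(n)}: $\fl_{A(nu), A(nt)} = \fl_{0,A(nt)}/\fl_{0,A(nu)} \sim (nu)/(nt) = u/t$.

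For step (ii), I would split
\[
\varphi_{A(nu), A(nt)}(s) = \sum_{k=A(nu)+1}^{A(nt)} \frac{\varphi_k(1)}{\fl_{A(nu), k-1}} + \sum_{k=A(nu)+1}^{A(nt)} \frac{\varphi_k(f_{k,A(nt)}(s)) - \varphi_k(1)}{\fl_{A(nu), k-1}}.
\]
For the main term I use $\varphi_k(1) = f_k''(1)/(2\fl_k^2)$ and (C2) to see that $\varphi_k(1) = (\nu/2)(1-\fl_k) + o(1-\fl_k)$ along $\fl_k < 1$; the contribution from indices with $\fl_k \geq 1$ is negligible by the summability in (C1). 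Rewriting $1/\fl_{A(nu), k-1} = \fl_{k-1, A(nt)}/\fl_{A(nu), A(nt)}$ and telescoping via $(1-\fl_k)\fl_{k, A(nt)} = \fl_{k, A(nt)} - \fl_{k-1, A(nt)}$ yields
\[
\sum_{k=A(nu)+1}^{A(nt)} (1-\fl_k)\fl_{k, A(nt)} = 1 - \fl_{A(nu), A(nt)} \to 1 - u/t,
\]
and since $\fl_{k-1, A(nt)}/\fl_{k, A(nt)} = \fl_k \to 1$ uniformly for large $k$, the main term converges to $(\nu/2)(t/u - 1)$.

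For the remainder, when $\nu > 0$ Lemma \ref{lemma:phi} provides the uniform estimate $|\varphi_k(f_{k,A(nt)}(s)) - \varphi_k(1)| \leq \delta_k(1-\fl_k)$ with $\delta_k \to 0$ along $\fl_k < 1$; combined with the boundedness of $\sum (1-\fl_k)/\fl_{A(nu), k-1}$, a Silverman--Toeplitz type argument drives the remainder to $0$. When $\nu = 0$, (C2) forces $\varphi_k(1) = o(1-\fl_k)$ and a parallel estimate for $\varphi_k(s)$ shows the entire shape function sum vanishes, recovering $h_{u/t}(s) = 1 - (u/t)(1-s)$. The chief obstacle is the uniform control of the $s$-dependent remainder, where condition (C3) (through Lemma \ref{lemma:phi}) and the boundedness of the truncated telescoping sum are both essential.
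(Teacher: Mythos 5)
Your proposal is correct and follows essentially the same route as the paper: the branching property plus the shape-function identity \eqref{eq:shape}, splitting $\varphi_k(f_{k,n}(s))$ into $\varphi_k(1)$ plus a remainder, with the main term handled via (C2) and the telescoping identity $\sum_k (1-\fl_k)\fl_{k,n} = 1-\fl_{j,n}$, and the remainder killed by Lemma \ref{lemma:phi} together with the Silverman--Toeplitz statement of Lemma \ref{lemma:anjk}. Your explicit separate treatment of the case $\nu=0$ is a small welcome addition, but the argument is otherwise the paper's own.
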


\begin{proof}
By \eqref{eq:shape}, we have
\[
\begin{split}
& (1 - f_{j,n}(s))^{-1} 
= \frac{1}{\fl_{j,n} }
\left( \frac{1}{1-s} + \sum_{k=j+1}^n \fl_{k-1,n} \varphi_k (f_{k,n} (s)) 
\right) \\
& = \frac{1}{\fl_{j,n} }
\left( \frac{1}{1-s} + \sum_{k=j+1}^n \fl_{k-1,n} \varphi_k (1)
- \sum_{k=j+1}^n \fl_{k-1,n} (\varphi_k (1) - \varphi_k (f_{k,n} (s))) 
\right).
\end{split}
\]
By Lemmas \ref{lemma:anjk} and \ref{lemma:phi}, for the third term 
we obtain
\[
\begin{split}
\left|\sum_{k=j+1}^{n} \fl_k \frac{\varphi_k(1) - 
\varphi_k(f_{k,n}(s))}{1-\fl_k} a_{n,k}^{(1)} \right| 
\leq \sum_{k=j+1}^{n} \left| 
\frac{\varphi_k(1) - \varphi_k(f_{k,n}(s))}{1-\fl_k} \right| 
| a_{n,k}^{(1)} | \to 0.
\end{split}
\]
Recalling from \eqref{eq:def-shape} that 
$\varphi_k(1) = f_k''(1) / (2 \overline f_k^2)$, we get
\[
\begin{split}
\left| \sum_{k=j+1}^{n}\frac{1}{\fl_k} \frac{f_k''(1)}{1-\fl_k}a_{n,k}^{(1)}-
\nu \sum_{k=j+1}^{n}a_{n,k}^{(1)}\right| & 
= \left|\sum_{k=j+1}^{n} a_{n,k}^{(1)}\left(\frac{1}{\fl_k} 
\frac{f_k''(1)}{1-\fl_k}-\nu\right) \right|\\ 
& \leq \sum_{k=1}^{n} |a_{n,k}^{(1)}|
\left| \frac{1}{\fl_k} \frac{f_k''(1)}{1-\fl_k}-\nu\right|\to 0,
\end{split}
\]
where
\[
\sum_{k=j+1}^{n}a_{n,k}^{(1)} = 
 \sum_{k=j+1}^{n} \left (\fl_{k,n} - \fl_{k-1,n} \right) 
= \left(1-\fl_{j,n}\right).
\]
By definition \eqref{eq:A(n)} of $A(nt)$,
\begin{equation}\label{eq:E-conv}
\overline f_{A(nu), A(nt)} \to \frac{u}{t},
\end{equation}
as $n\to\infty$, thus,
substituting $j = A(nu)$ and $n = A(nt)$, by \eqref{eq:E-conv},
\[
\left( 1 - f_{A(nu), A(nt)}(s) \right)^{-1} \to
\frac{t}{u} \left(
\frac{1}{1-s} + \frac{\nu}{2} \left(1 - \frac{u}{t}\right)
\right).
\]
After rearranging, the result follows.
\end{proof}

Now we prove the properties of the limit process.

\begin{proof}[Proof of Lemma \ref{lemma:condU}]
Recall that $p = \tfrac{2}{2+\nu} = 1 - q$. 
First we calculate the conditional transition generating functions. 
Using the Markov property,
\[
\begin{split}
& \E \left[ s^{U(t)} | U(u) = x_0, U(1) > 0 \right] 
= \sum_{x=1}^\infty s^x \p ( U(t) = x | U(u) = x_0, U(1) > 0) \\
& = \sum_{x=1}^\infty s^{x} 
\frac{\p ( U(t) = x, U(u) = x_0, U(1) > 0)}{\p ( U(u) = x_0, U(1) > 0)} \\
& = \sum_{x=1}^\infty s^{x} 
\frac{\p ( U(t) = x | U(u) = x_0) \p ( U(1) > 0 | U(t) = x)}
{\p ( U(1) > 0 | U(u) = x_0 )} \\
& = \sum_{x=1}^\infty s^{x} 
\frac{\p ( U(t) = x | U(u) = x_0) [ 1 - ( h_t(0))^x ]}
{1 - (h_{u}(0))^{x_0}} \\
& = \frac{(h_{u/t}(s))^{x_0} - 
(h_{u/t}(h_{t} (0) s))^{x_0} }
{1 - (h_{u}(0))^{x_0}}.
\end{split}
\]

In the calculations below we use the identity
\begin{equation} \label{eq:h-iden}
\frac{p h_a(s)}{1 - qh_a(s) } = 
1 - \frac{ a ( 1 + \frac{\nu}{2})}{(1-s)^{-1} + \frac{\nu}{2}},
\quad a \in [0,1], \ s \in [0,1].
\end{equation}
The proof of \eqref{eq:h-iden} is a long but straightforward calculation.
By the law of total probability, \eqref{eq:U-cgf}, and \eqref{eq:h-iden},
\begin{equation} \label{eq:Usurv}
\begin{split}
\p_\varepsilon ( U(1) > 0) 
& = \sum_{x= 1}^\infty \p ( U(\varepsilon) = x) \, \p (U(1) > 0 | U(\varepsilon ) = x) \\
& = \sum_{x=1}^\infty p q^{x-1}  (1 - (h_{\varepsilon}(0))^x) 
= 1 - \frac{p h_{\varepsilon}(0)}{1 - q h_{\varepsilon}(0)}
= \varepsilon.
\end{split}
\end{equation}
Note that this means that the distribution of the extinction time 
of the birth-and-death process $Z$ is exponential; see e.g.~formula (3.3)
in \cite{ColletMartinez}.

Using \eqref{eq:Usurv}, \eqref{eq:k-def}, and \eqref{eq:h-iden} again, 
we obtain
\[
\begin{split}
& \E_\varepsilon \left[ s^{U(t)} | U(1) > 0 \right] 
= \sum_{x=1}^\infty \p_\varepsilon(U(\varepsilon)= x | U(1) > 0)
\E \left[ s^{U(t)} | U(1) > 0, U(\varepsilon ) = x \right] \\
& = \sum_{x=1}^\infty 
\frac{\p_\varepsilon(U(\varepsilon)= x , U(1) > 0)}
{\p_\varepsilon( U(1) > 0)}
k_{\varepsilon, t; x}(s) \\
& = \varepsilon^{-1}
\sum_{x=1}^\infty p q^{x-1}
\left[ (h_{\varepsilon/t}(s))^x
- (h_{\varepsilon/t}(h_{t}(0)s))^x \right] \\
& = \varepsilon^{-1} 
\left[ \frac{ p h_{\varepsilon /t}(s)}
{ 1 - q h_{\varepsilon /t}(s)} -
\frac{ p h_{\varepsilon /t}(h_{t}(0)s)}
{ 1 - q h_{\varepsilon /t}(h_{t}(0)s)} \right] \\
& = 
\frac{ps}{1- qs} \, \frac{t^{-1} ( 1 - h_{t}(0))}
{1 - q s h_{t}(0)} = g_t(s),
\end{split}
\]
proving the statement.
\end{proof}

Next, we prove the existence of a c\`adl\`ag version of the 
limit process on $(0,1]$.

\begin{proof}[Proof of Corollary \ref{cor:limit}]
The existence of a Markov process $\widetilde U$ 
on the product space 
$\N^{(0,1]}$ with the prescribed distribution follows 
from Kolmogorov consistency theorem. 
Furthermore, for any $\varepsilon > 0$ the process has 
a c\`adl\`ag version on $[\varepsilon, 1]$. 
This follows from the fact that $\widetilde U$ on $[\varepsilon, 1]$
is a time-transformed birth-and-death process.
We construct a c\`adl\`ag version on $(0,1]$.

Using time reversal 
$\overline U(s) = \widetilde U(1-s-)$, $s \in [0,1)$,
where $U(s - )$ stands for the left limit in $s$,
defines a Markov chain with transition probabilities 
\begin{equation} \label{eq:tran-overline}
\p ( \overline U(t) = y | \overline U(u) = x) = 
\frac{g_{1-t}[y]}{g_{1-u}[x]} \p ( \widetilde U(1-u-) = x | 
\widetilde U(1-t-) = y),
\end{equation}
and marginals 
\[
\E (s^{\overline U(t)} )= g_{1-t}(s).
\]
Thus, we have a Markov process $(\overline U(t): t \in [0,1))$
with initial distribution $\textrm{Geom}(\tfrac{2}{2+\nu})$
and transition probabilities given in \eqref{eq:tran-overline}. The process
has a c\`adl\`ag version on the interval $[0,1-n^{-1}]$, and can be continued 
for $t \in [1-n^{-1}, 1-(n+1)^{-1}]$. Concatenating these processes we obtain a c\`adl\`ag version on $[0,1)$. 
From $\overline U$ a time reversal provides the required 
version of $\widetilde U$.
\end{proof}

The first step towards Theorem \ref{thm:fdd>1}
is to show the convergence of finite dimensional distributions.

\begin{proposition} \label{prop:fdd}
Assume that the BPVE $(X_n)_{n\in \N}$ satisfies conditions (C1)--(C3) and for $k \geq 0, \ell \geq 0$ let 
$0 < t_{-k} < t_{-k+1} < \ldots < t_{-1} < t_0 = 1 < t_{1} < 
\ldots < t_{\ell} < \infty$, and $x_i \in \N$, $i = -k, \ldots, \ell$. 
Then
\[
\begin{split}
& \lim_{n \to \infty}
\p ( X_{A(n t_i)} = x_i, i= -k, \ldots,  \ell | 
X_{A(n)} > 0) \\
& = \p_{t_{-k}} ( U(t_i) = x_i, i= -k, \ldots, \ell | U(1) > 0),
\end{split}
\]
where $\p_{t_{-k}}$ denotes the law of $(U(t))_{t \geq t_{-k}}$ with 
initial distribution $U(t_{-k}) \sim \textrm{Geom}(\tfrac{2}{2+\nu})$.
\end{proposition}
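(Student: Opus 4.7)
The plan is to establish convergence of the joint probability $\p(X_{A(nt_i)} = x_i,\ i = -k, \ldots, \ell \mid X_{A(n)} > 0)$ for each fixed tuple $(x_i)$ to $\p_{t_{-k}}(U(t_i) = x_i,\ i = -k, \ldots, \ell \mid U(1) > 0)$. Using the Markov property of the BPVE at $A(nt_{-k})$, the conditional joint probability factorizes as the product
\[
\p(X_{A(nt_{-k})} = x_{-k} \mid X_{A(n)} > 0) \cdot \p(X_{A(nt_i)} = x_i,\ i > -k \mid X_{A(nt_{-k})} = x_{-k},\ X_{A(n)} > 0),
\]
and an analogous factorization of the limit holds by the Markov property of $U$ together with Lemma \ref{lemma:condU}. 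I would prove convergence factor by factor and combine pointwise.

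For the conditional future factor, the branching property reduces it to an $x_{-k}$-fold convolution of the trajectory started from $1$ at $A(nt_{-k})$. Iterating Lemma \ref{lemma:funclimit1}, which yields $f_{A(nt_i), A(nt_{i+1})}(s) \to h_{t_i/t_{i+1}}(s)$ for $-k \leq i \leq \ell-1$, through the nested composition defining the joint generating function, and noting that $\p(X_{A(n)} > 0 \mid X_{A(nt_{-k})} = x_{-k}) = 1 - f_{A(nt_{-k}), A(n)}(0)^{x_{-k}} \to 1 - h_{t_{-k}}(0)^{x_{-k}}$, gives convergence to the distribution of $(U(t_i))_{i > -k}$ given $U(t_{-k}) = x_{-k}$ and $U(1) > 0$.

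For the marginal factor, I would use the closed-form expression
\[
\E[s^{X_{A(nt_{-k})}} \mid X_{A(n)} > 0] = \frac{f_{0, A(nt_{-k})}(s) - f_{0, A(nt_{-k})}\bigl(s\, f_{A(nt_{-k}), A(n)}(0)\bigr)}{1 - f_{0, A(n)}(0)},
\]
coming from the branching property. The shape-function expansion used in the proof of Lemma \ref{lemma:funclimit1}, applied at left endpoint $j = 0$, gives $\fl_{0, A(nu)}/(1 - f_{0, A(nu)}(s)) \to 1/(1-s) + \nu/2$, and \eqref{eq:A(n)} yields $n\,\fl_{0, A(nu)} \to 1/u$. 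After a short algebraic simplification of the type used in \eqref{eq:h-iden}, the limit equals $g_{t_{-k}}(s)$, which by Lemma \ref{lemma:condU} is the generating function of $U(t_{-k})$ under $\p_{t_{-k}}$ conditioned on $U(1) > 0$. Combining the two factor-limits pointwise in $(x_i)$ yields the claimed joint convergence. I expect the main technical obstacle to be the marginal factor, since the endpoint case $j = 0$ is not directly covered by Lemma \ref{lemma:funclimit1}; however, the Silverman--Toeplitz argument of Lemma \ref{lemma:anjk} extends to this regime with only notational adjustments.
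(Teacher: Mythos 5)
Your proposal is correct, and its skeleton is the same as the paper's: Markov factorization at the earliest time point, convergence of the transition generating functions via Lemma \ref{lemma:funclimit1}, and identification of the time-$t_{-k}$ marginal conditioned on survival at time $1$. The difference is in the bookkeeping of that marginal. The paper pivots on the prescribed value $x_0$ at $t_0=1$: it first proves the limit of $\p(X_{A(nt_i)}=x_i, \forall i \mid X_{A(n)}=x_0)$, handling the ratio $\p(X_{A(nt_{-k})}=x_{-k})/\p(X_{A(n)}=x_0)$ by citing the one-dimensional Yaglom limit (Theorem 1 of \cite{KevKub}) and the survival-probability asymptotics (Lemma 4 of \cite{KevKub}), and then converts back to conditioning on $\{X_{A(n)}>0\}$ using \eqref{eq:Usurv}. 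You instead condition on $\{X_{A(nt_{-k})}=x_{-k}\}\cap\{X_{A(n)}>0\}$ and compute $\E[s^{X_{A(nt_{-k})}}\mid X_{A(n)}>0]$ in closed form, recovering the entrance law $g_{t_{-k}}$ of Lemma \ref{lemma:condU} directly from the shape-function expansion at $j=0$. Your route re-derives the one-dimensional inputs rather than quoting them, which makes the appearance of $g_{t_{-k}}$ more transparent but duplicates work from \cite{KevKub}; note also that Lemma \ref{lemma:anjk} is already stated for sums starting at $j=1$, so the $j=0$ endpoint needs no extension, only the observation that $\sum_{k=1}^{n}a_{n,k}^{(1)}=1-\fl_{0,n}\to 1$. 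The one small point you should make explicit is that in $f_{0,A(nt_{-k})}\bigl(s\,f_{A(nt_{-k}),A(n)}(0)\bigr)$ the inner argument varies with $n$, so you need the shape-function asymptotics uniformly on compact subsets of $[0,1)$ (which follows, e.g., from monotonicity of $s\mapsto (1-f_{0,m}(s))^{-1}$ and continuity of the limit); with that remark the argument is complete.
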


\begin{proof}
Recall the notation $p = \tfrac{2}{2 + \nu} = 1-q$.
First we show that 
\begin{equation} \label{eq:fdd-aux1}
\begin{split}
& \lim_{n \to \infty}
\p ( X_{A(n t_i)} = x_i, i= -k, \ldots,  \ell | 
X_{A(n)} = x_0) \\
& =
q^{x_{-k} - x_0} (t_{-k})^{-1} \,
\p ( { U}(t_i) = x_i, i = -k, \ldots, \ell | { U}(t_{-k}) = x_{-k} ). 
\end{split}
\end{equation}

By the Markov property
\[
\begin{split}
& \p ( X_{A(n t_i)} = x_i, i= -k,  \ldots, \ell | 
X_{A(n)} = x_0) \\
& = \frac{\p(X_{A(n t_{-k})} = x_{-k})}{\p(X_{A(n)} = x_0)}
\prod_{i=-k+1}^\ell 
\p(X_{A(nt_i)}=x_i | X_{A(nt_{i-1})}=x_{i-1}).
\end{split}
\]
By Lemma \ref{lemma:funclimit1} and \eqref{eq:U-cgf}
\[
\lim_{n \to \infty} \p(X_{A(nt_i)}=x_i | X_{A(nt_{i-1})}=x_{i-1})
= \p ( { U}(t_i) = x_i | { U}(t_{i-1}) = x_{i-1} ).
\]
Furthermore, by Theorem 1 of \cite{KevKub} for all $x \in \N$,  
\begin{equation} \label{eq:thm1-lim}
\lim_{n\to\infty} \p(X_{n} = x| X_n>0) 
= p q^{x-1},
\end{equation}
and by Lemma 4 in \cite{KevKub},
\[
\frac{\p(X_{A(ns)} > 0)}{\p(X_{A(n)} > 0 )} 
= \frac{1 - f_{0, A(ns)}(0)}{1 - f_{0,A(n)}(0)} \sim 
\frac{\overline f_{0,A(ns)} }{\overline f_{0,A(n)} } \sim s^{-1},
\]
where the last asymptotic equality follows from \eqref{eq:A(n)}.
Hence,
\[
\begin{split}
& \lim_{n \to \infty} 
\frac{\p(X_{A(n t_{-k})} =x_{-k})}{\p(X_{A(n)} = x_0)} \\
& = \lim_{n\to \infty} 
\frac{\p(X_{A(n t_{-k})} = x_{-k})}{\p(X_{A(n t_{-k})} > 0)} 
\cdot 
\frac{\p(X_{A(n t_{-k})} > 0)}{\p(X_{A(n)} > 0)} 
\cdot \frac{\p(X_{A(n)} > 0)}{\p(X_{A(n)} = x_0)} \\
& = q^{x_{-k}-x_0} (t_{-k})^{-1},
\end{split}
\]
thus \eqref{eq:fdd-aux1} follows.

Using \eqref{eq:fdd-aux1}, \eqref{eq:thm1-lim}, and \eqref{eq:Usurv} we have
\begin{equation*}\label{eq:findim}
\begin{split}
& \lim_{n \to \infty}
\p(X_{A(nt_i)} = x_i, i =-k,\ldots, \ell | X_{A(n)} > 0) \\
& = \lim_{n \to \infty} 
\frac{\p ( X_{A(n)} = x_0)}{\p ( X_{A(n)} > 0) }
\p( X_{A(nt_i)} = x_i, i = -k,\ldots, \ell | X_{A(n)} = x_0) \\
& = p q^{x_{-k} -1} (t_{-k})^{-1}
\p ( { U}(t_i) = x_i, i = -k, \ldots, \ell | { U}(t_{-k}) = x_{-k}) \\
& = \p_{t_{-k}} ( { U}(t_i) = x_i, i = -k, \ldots,  \ell | U(1)  > 0)
\frac{p q^{x_{-k} -1} \p ( U(1) > 0)}
{ t_{-k} \p ( U(t_{-k} ) = x_{-k} )} \\
& = \p_{t_{-k}} ( { U}(t_i) = x_i, i = -k, \ldots,  \ell | U(1)  > 0),
\end{split}
\end{equation*}
proving the statement.
\end{proof}

\begin{proof}[Proof of Theorem \ref{thm:fdd>1}]
The convergence of finite dimensional distributions follows from 
Proposition \ref{prop:fdd}. 
This together with tightness, by Theorem 3.7.8 in Ethier and Kurtz \cite{EthKur} implies
convergence. Now, let us prove tightness.

We prove the classical tightness criteria in Theorem 16.8 and in the Corollary after it in Billingsley \cite{Billingsley}.
First we recall the standard notation from \cite{Billingsley}. For 
$x \in D[0,\infty)$, $T \subset [0,\infty)$ let
\[
w(x, T) = \sup_{s,t \in T} |x(t) - x(s)|,
\]
and for $m > 0$, $\delta > 0$
\begin{equation*} \label{eq:def-w'}
w_m'(x,\delta) = \inf \max_{1\leq i\leq v} w(x,[t_{i-1},t_i)),
\end{equation*}
where the infimum extends over all partitions $[t_{i-1}, t_i)$, $i = 1,\ldots, v$,
of $[ 0, m )$ such that $t_i - t_{i-1} > \delta$ for all $i$.

We have to show that (condition (i) in \cite[Corollary of Theorem 16.8]{Billingsley})
\begin{equation} \label{eq:bill1}
\lim_{a\to\infty} \limsup_{n\to\infty} 
\p ( |X_{A(nt)}|\geq a | X_{A(n)} > 0) = 0, \quad \forall t \geq \varepsilon,
\end{equation}
and that (condition (ii) in \cite[Theorem 16.8]{Billingsley}) for all $m > 0$
\begin{equation} \label{eq:bill2}
\lim_{\delta \to 0} \limsup_{n\to\infty}
\p(w_m'((X_{A(nt)})_{t\in [\varepsilon, m]}, \delta) \geq \eta| X_{A(n)} > 0) = 0.
\end{equation}
From the convergence of finite dimensional distributions \eqref{eq:bill1} follows 
immediately.
\smallskip 

We prove \eqref{eq:bill2}. For $\varepsilon < 1$ we have 
$\{ X_{A(n \varepsilon)} > 0 \} \supset \{ X_{A(n)} > 0 \}$ and 
by \eqref{eq:E-conv} and Lemma 4 in  \cite{KevKub} 
$\tfrac{\p( X_{A(n\varepsilon)} > 0)}{\p( X_{A(n)} > 0)} \to \varepsilon^{-1}$.
Thus for any event $F$
\[
\begin{split}
\limsup_{n \to \infty} \p ( F | X_{A(n)} > 0) & \leq 
\limsup_{n \to \infty} \frac{ \p ( F \cap \{ X_{A(n\varepsilon)} > 0 \} ) }
{\p (X_{A(n\varepsilon)} > 0)} \frac{\p( X_{A(n\varepsilon)} > 0)}{\p( X_{A(n)} > 0)} \\
& \leq \varepsilon^{-1} \limsup_{n \to \infty} \p ( F | X_{A(n \varepsilon)} > 0).
\end{split}
\]
Therefore, it is enough to prove \eqref{eq:bill2}
with the condition $X_{A(n\varepsilon)} > 0$.

Next we show that we can assume bounded sample paths.
Lemma 4 in \cite{KevKub} and \eqref{eq:E-conv} imply 
that 
\begin{equation} \label{eq:E-lim}
\begin{split}
\lim_{n\to\infty} \E [ X_{A(nm)} | X_{A(n\varepsilon)} > 0 ] 
& = \lim_{n \to \infty} \frac{\overline f_{0,A(nm)}}{1 - f_{0,A(n\varepsilon)}} \\
& = \lim_{n \to \infty} 
\frac{\overline f_{0,A(n\varepsilon)}}{1 - f_{0,A(n\varepsilon)}}
\frac{\overline f_{0,A(nm)}}{ \overline f_{0,A(n\varepsilon)}}
= \frac{2+\nu}{2} \frac{\varepsilon}{m} .
\end{split}
\end{equation}
Fix $K > 0$ large, and define the stopping time (with respect to the natural 
filtration)
$\tau = \min \{ k: \, A(n\varepsilon) \leq k \leq A(nm), \, X_k > K \}$, with the 
convention $\min \emptyset = \infty$.
By the law of total expectation and the branching property, we have
\[
\begin{split}
& \E[ X_{A(nm)} \ind_{\{A(n\varepsilon)\leq \tau \leq A(nm)\}}|X_{A(n\varepsilon)} > 0 ] \\
& = \sum_{k=A(n\varepsilon)}^{A(nm)}\E [X_{A(nm)} | \tau = k] 
\p(\tau = k | X_{A(n\varepsilon)} > 0) \\
& \geq \sum_{k=A(n\varepsilon)}^{A(nm)} K \fl_{k, A(nm)} 
\p(\tau = k | X_{A(n\varepsilon)} > 0) \\
& \geq \frac{K}{C_0} 
\fl_{A(n\varepsilon), A(nm)} 
\p\left( \max_{A(n\varepsilon)\leq k \leq A(nm)} X_k > K \Big| 
X_{A(n\varepsilon)} > 0 \right),
\end{split}
\]
where 
\begin{equation}\label{eq:C_0}
C_0 := \prod_{n=1, \overline f_n > 1}^\infty \overline f_n < \infty.
\end{equation}
At the last inequality we used that typically $\overline f_n < 1$, and for those $n$
when the converse inequality holds, the contribution is bounded by $C_0$.
Thus by \eqref{eq:E-lim} and \eqref{eq:E-conv}
for any $\varepsilon > 0$, and $m > 0$
\begin{equation}\label{eq:X<K}
\lim_{K\to\infty} \limsup_{n\to\infty} 
\p \left( \max_{A(n\varepsilon) \leq k \leq A(nm)} X_k > K \Big| 
X_{A(n\varepsilon)} > 0 \right) = 0.
\end{equation}
This means that we can indeed restrict to bounded sample paths.
\smallskip

Since each jump has at least size 1, if 
$w'_m((X_{A(nt)})_{t \in [\varepsilon, m]}, \delta) > 0$ then 
the process has either at least 
$N = \lfloor \tfrac{m-\varepsilon}{\delta} \rfloor + 1$
jumps, where $\lfloor \cdot \rfloor$ stands for the lower integer part, 
or at least three jumps on an interval $[A(nt_0), A(n(t_0 + \delta))]$ 
for some $t_0 \in (\varepsilon, m)$.
Fix $K > 0$ large, and introduce the events
\begin{equation*} \label{eq:def-CD}
\begin{split}
& B_n = \{ \max_{A(n\varepsilon) \leq k \leq A(nm)} X_k \leq K \}, \\
& C_n = \{ (X_{A(nt)})_{t \in [\varepsilon, m]} \text{ has at least $N$ jumps} \}, \\
& D_n = \{ \exists t_0 \in [\varepsilon, m] \text{ such that } 
(X_{t}) \text{ has at least 3 jumps in $[A(nt_0), A(n(t_0 + \delta))]$} \}.
\end{split}
\end{equation*}
Then for any $\eta \in (0,1)$
\begin{equation} \label{eq:w'bound}
\begin{split}
& \p(w'((X_{A(nt)})_{t\in [\varepsilon,m]}, \delta) \geq \eta | X_{A(n \varepsilon)} > 0) \\
& \leq \p( C_n \cap B_n | X_{A(n\varepsilon)} > 0 ) +
\p( D_n \cap B_n | X_{A(n \varepsilon)} > 0 ) + \p( B_n^c | X_{A(n \varepsilon)} > 0 ). 
\end{split}
\end{equation}
For the third term we have by \eqref{eq:X<K}
\begin{equation} \label{eq:wbound-3}
\lim_{K \to \infty} \limsup_{n \to \infty} \p ( B_n^c | X_{A(n\varepsilon)} > 0 ) = 0. 
\end{equation}

To handle the first two terms in \eqref{eq:w'bound} we have to understand the 
jump probabilities. For a discrete time process a jump means that $X_k \neq X_{k+1}$.
Since $\frac{f_n''(1)}{|1-\fl_n|}$ is bounded by (C2), with $f_n[1] = \p(\xi_n = 1)$, 
we have with some $s \in (0,1)$
\begin{equation} \label{eq:aux-f-n-bound}
\begin{split}
1 - f_n[1] & = 1  - f_n'(0) = f_n'(1) - f_n'(0) + 1 - f_n'(1) \\
& = f_n''(s) + 1 - f_n'(1) \leq c | 1 - f_n'(1)|,
\end{split}
\end{equation}
for some $c > 0$ and for $n$ large enough. Here, and later on, $c$ 
stands for a positive constant, whose value does not depend on relevant 
quantities, and can change from line to line.
Thus for $0 < x_k \leq K$,
\begin{equation}\label{eq:prob-jump}
\begin{split}
\p(X_{k+1} \neq X_k | X_k = x_k) 
& \leq 1 - (f_{k+1}[1])^{x_k} = 1 - (1 - (1 - f_{k+1}[1]))^{x_k} \\
& \leq 1 - \left( 1 - 2 x_k (1 - f_{k+1}[1]) \right) 
\leq c K |1-\fl_{k+1}|,
\end{split}
\end{equation}
by \eqref{eq:aux-f-n-bound}.

Next consider the event $C_n \cap B_n$.
Note that
\[
\{X_{k_i} \neq X_{k_i+1}, i=1,\dots, N\} \subset 
\{X_{k_i} > 0, i=1,\dots, N\} \subseteq \{X_{k_N} > 0\},
\]
hence
\[
\begin{split}
& \p ( B_n \cap C_n | X_{A(n\varepsilon)} > 0) \\
& = \p(\exists \,
A(n\varepsilon) \leq k_1 <  \ldots < k_N \leq A(nm): 
X_{k_i} \neq X_{k_i+1}, i = 1,\ldots, N, 
B_n | X_{A(n\varepsilon)} > 0) \\
& \leq \sum_{A(n\varepsilon) \leq k_1 < \ldots <k_N \leq A(nm)} 
\p(X_{k_i} \neq X_{k_i + 1}, i=1,\ldots, N, B_n | X_{A(n\varepsilon)} > 0).
\end{split}
\]
For $A(n \varepsilon) \leq k_1 < \ldots < k_N \leq A(nm)$
\[
\begin{split}
& \p(X_{k_i} \neq X_{k_i + 1}, i=1,\dots, N, B_n | X_{A(n\varepsilon)} > 0) \\
& \leq \p(X_{k_i} \neq X_{k_i + 1}, X_{k_i}, X_{k_i+1}\leq K, 
i=1,\dots, N | X_{A(n\varepsilon)} > 0)\\
& = \sum_{\substack{1\leq x_{k_1}, \dots, x_{k_N} \leq K, \\ x_{k_i} \neq x_{k_i+1}}}
\p(X_{k_i} = x_{k_i}, X_{k_i + 1} = x_{k_i+1}, i=1,\dots, N | X_{A(n\varepsilon)} > 0).
\end{split}
\]
Now let us compute the above expression for given $1\leq x_{k_i} \leq K$ satisfying the condition $x_{k_i} \neq x_{k_i + 1}$. 
By the Markov property and \eqref{eq:prob-jump} we have
\[
\begin{split}
& \p(X_{k_i} = x_{k_i}, X_{k_i + 1} = x_{k_i+1}, i=1,\dots, N | 
X_{A(n\varepsilon)} > 0) \\
& = \p(X_{k_1} = x_{k_1} | X_{A(n\varepsilon)} > 0) 
\p(X_{k_1 + 1} = x_{k_1+1} | X_{k_1} = x_{k_1}) \cdots \\
& \phantom{=} \times 
\p(X_{k_N + 1} = x_{k_N + 1} | X_{k_N} = x_{k_N}) \\
& \leq 1\cdot cK |1 -\fl_{k_1}| \cdot 1 \cdot c K |1-\fl_{k_2}| \cdots 
c K |1-\fl_{k_N}|.
\end{split}
\]
Hence,
\begin{equation} \label{eq:Njump}
\begin{split}
& \p(X_{k_i} \neq X_{k_i + 1}, i=1,\dots, N, 
\max_{n \varepsilon \leq k \leq n m} X_k \leq K |
X_{A(n\varepsilon)} > 0) \\
& \leq \sum_{\substack{1\leq x_{k_1}, \dots, x_{k_N} \leq K, \\ x_{k_i} \neq x_{k_i+1}}}
\p(X_{k_i} = x_{k_i}, X_{k_i + 1} = x_{k_i+1}, i=1,\dots, N | X_{A(n\varepsilon)} > 0) \\
& \leq (cK^2(K-1))^N |1-\fl_{k_1}| \cdots |1-\fl_{k_N}|,
\end{split}
\end{equation}
and thus
\[
\begin{split}
& \p ( B_n \cap C_n | X_{A(n\varepsilon)} > 0) \\
& \leq \sum_{A(n\varepsilon) \leq k_1 < \ldots <k_N \leq A(nm)} 
\p(X_{k_i} \neq X_{k_i + 1}, i=1,\ldots, N, B_n | X_{A(n\varepsilon)} > 0) \\
& \leq (cK^3)^N \sum_{A(n\varepsilon) \leq k_1 < \ldots <k_N \leq A(nm)}
|1-\fl_{k_1}| \cdots |1-\fl_{k_N}| \\
& \leq (cK^3)^N \frac{1}{N!} \left(\sum_{k=A(n\varepsilon)}^{A(nm)} |1-\fl_k|\right)^N
\leq \frac{C(K)^N}{N!},
\end{split}
\]
where $C(K)$ stands for a positive constant depending only on $K$, whose actual value 
can be different at each appearance. At the last inequality we used that 
\begin{equation}\label{eq:sum-1-fl}
\begin{split}
\sum_{k=A(n\varepsilon)}^{A(nm)} |1-\fl_k| & =
\sum_{k=A(n\varepsilon)}^{A(nm)} (1-\fl_k) + 2 \sum_{k=A(n\varepsilon)}^{A(nm)} (1-\fl_k)_- \\
& \leq - \log \fl_{A(n\varepsilon) - 1, A(nm)}
+ 2 \sum_{k=A(n\varepsilon)}^\infty (1-\fl_k)_-,
\end{split}
\end{equation}
where the first term tends to $\log m - \log \varepsilon$ by \eqref{eq:A(n)} and the second goes to 0 by (C1) as $n\to\infty$.
Since $N = N(\delta) \to \infty$ as $\delta \to 0$ we obtain
\begin{equation} \label{eq:BC-bound}
\lim_{\delta \to 0}
\limsup_{n \to \infty} \p (C_n \cap B_n | X_{A(n \varepsilon)} > 0) = 0.
\end{equation}
\smallskip

Finally, we bound the probability of three close jumps.
Consider a fixed $t_0$. By \eqref{eq:Njump} and \eqref{eq:sum-1-fl} we have
\[
\begin{split}
& \p ( (X_k)_{A(n t_0) \leq k \leq A(n(t_0+\delta))} \text{ has three jumps},
B_n |X_{A(n \varepsilon)} >0) \\
& \leq \sum_{A(nt_0) \leq k_1 <k_2 < k_3 \leq A(n (t_0+\delta))} 
\p (X_{k_i} \neq X_{k_i + 1}, i=1,2, 3, B_n | X_{A(n \varepsilon)} > 0) \\
& \leq (cK^3)^3 \sum_{A(nt_0) \leq k_1 <k_2 < k_3 \leq A(n (t_0+\delta))} 
|1-\fl_{k_1}| |1-\fl_{k_2}| |1-\fl_{k_3}| \\
& \leq (cK^3)^3 \frac{1}{3!} \left(
\sum_{k=A(n t_0)}^{A(n (t_0+\delta))} |1-\fl_k|
\right)^3 \\
&\leq
(c K^3)^3 \frac{1}{3!} \left(-\log \fl_{A(n t_0)-1,A(n(t_0+\delta))} + 2 \sum_{k=A(n t_0)}^{A(n(t_0+\delta))} (1-\fl_k)_- \right)^3,
\end{split}
\]
where the first term tends to 
\[
\log \left(1 + \frac{\delta}{t_0}\right) \leq  \frac{\delta}{t_0}
\]
and the second to 0 as $n \to\infty$ by (C1) and the fact that $A(n) \to\infty$.
Therefore,
\begin{equation} \label{eq:d-bound1}
\limsup_{n \to \infty} \p ( (X_k)_{A(n t_0) \leq k \leq A(n(t_0+\delta))} 
\text{ has three jumps}, B_n |X_{A(n \varepsilon)} >0) \leq C(K) \delta^3. 
\end{equation}
Since 
$[\varepsilon, m] \subset \cup_{\ell = 0}^{\lfloor (m-\varepsilon)/\delta \rfloor}
[\varepsilon + \ell \delta, \varepsilon + (\ell+1) \delta]$, we have by a simple 
union bound combined with \eqref{eq:d-bound1} that 
\begin{equation} \label{eq:d-bound}
\limsup_{n \to \infty} \p ( D_n \cap B_n | 
X_{A(n\varepsilon)} > 0) \leq C(K) \delta^2.
\end{equation}

By \eqref{eq:wbound-3} we can choose $K$ large enough to make the 
limit superior (in $n$) of the third term in \eqref{eq:w'bound} arbitrarily small.
Then with this fixed $K$ we can apply \eqref{eq:BC-bound} and \eqref{eq:d-bound}.
Summarizing, we obtain that \eqref{eq:bill2} holds, thus the proof is complete.
\end{proof}

\begin{proof}[Proof of Corollary \ref{cor:reverse}]
Since by Theorem \ref{thm:fdd>1} we have convergence on $D([\varepsilon, \infty))$ for arbitrary $\varepsilon > 0$ and the limit is a.s.~continuous at $t=1$, Theorem 16.2 in \cite{Billingsley} implies that the convergence also holds on $D([\varepsilon, 1])$.

Let $\psi_m: D([1, m]) \to D([1,m])$,
\[
(\psi_m x)(t) = \begin{cases}
x(t), & \text{if $1 \leq t\leq m-1$}, \\
(m-t) \, x(t), & \text{if $m-1\leq t \leq m$},
\end{cases}
\]
and define $\phi_\varepsilon: D([\varepsilon, 1]) \to D([1,\frac{1}{\varepsilon}])$ such that 
\[
(\phi_\varepsilon x)(t)  = x \left( \frac{1}{t} - \right),
\]
where $x(u - )$ stands for the left limit in $u$.
It can easily be proven that $\psi_m$ and $\phi_\varepsilon$ are continuous.
Thus, the statement follows from Lemma 3 of Section 16 in \cite{Billingsley} after applying the mapping theorem with $\psi_\frac{1}{\varepsilon} \circ \phi_\varepsilon$.
\end{proof}

\subsection{Proofs of Subsection \ref{subsec:BPVE-immig}}

Recall that $h_n(s) = \sum_{k=0}^\infty h_n[k] s^k$ denotes the g.f.~of the offspring distribution, $g_n(s) = \E s^{Y_n}$ is the g.f.~of the population size 
and let
\[
g_{j,n} (s) = \E[s^{Y_n} | Y_j = 0]
\]
denote the conditional g.f.~of the process $(Y_n)_{n\in\N}$.

The following result assures that the limiting process is an honest, standard 
continuous time Markov branching process with immigration. For general 
definitions on continuous time Markov chains we refer to Anderson \cite{Anderson}.

\begin{lemma}\label{lemma:funclimit-bev}
Assume that conditions (C1)--(C3) and either (C4) or (C4') hold.
Then for the BPVEI $(Y_n)$ defined in \eqref{eq:Y},
\[
\lim_{n \to \infty} g_{A(nu),A(nt)}(s) =
\frac{f_Y(s)}{f_Y(h_{u/t}(s))},
\]
and for any $y \in \N$
\[
\lim_{n \to \infty} \E [ s^{Y_{A(nt)}} | Y_{A(nu)} = y ]
= \frac{f_Y(s)}{f_Y(h_{u/t}(s))} \left( h_{u/t}(s) \right)^y.
\]
Furthermore, the transition generating functions
\begin{equation*} 
G_y(s, t) = \frac{f_Y(s)}{f_Y(h_{e^{-t}}(s))} \left( h_{e^{-t}}(s) \right)^y
\end{equation*}
define a homogeneous standard honest transition function of a Markov branching 
process with immigration.
\end{lemma}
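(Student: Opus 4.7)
My plan is to exploit the standard multiplicative factorisation for a BPVEI started from zero, take logarithms, and pass to the limit in the resulting sum via a Riemann-sum / Silverman--Toeplitz argument; the general-$y$ case then follows from the branching property, and the transition-function axioms for $G_y(s,t)$ reduce to short manipulations of generating functions.

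Conditioning on the immigration sizes and invoking independence gives $g_{j,n}(s)=\prod_{k=j+1}^{n}h_k(f_{k,n}(s))$, so after taking logarithms the problem becomes the asymptotics of $\sum_{k=A(nu)+1}^{A(nt)}\log h_k(f_{k,A(nt)}(s))$. Expanding $h_k$ in factorial moments, $h_k(s)-1=\sum_{i\geq 1}\frac{m_{k,i}}{i!}(s-1)^i$, Taylor's integral remainder bounds the tail beyond the $\kappa$-th term by $o(1-\fl_k)$: under (C4) directly from $\lambda_K=0$ together with the estimate $\frac{m_{k,K}}{K!}|s-1|^K$ on the remainder, uniformly in $s\in[0,1]$; under (C4'), by truncating at a large index $N$ and using the growth condition $\limsup\lambda_n^{1/n}\leq 1$ together with the uniform boundedness of $m_{k,i}/(i!|1-\fl_k|)$. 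Combined with $\log(1+x)=x+O(x^2)$ and $\sum_{k\in[A(nu),A(nt)]}(1-\fl_k)^2\to 0$, this lets one replace each $\log h_k(f_{k,n}(s))$ by $(1-\fl_k)\sum_{i=1}^{\kappa}\lambda_i^{(k)}(f_{k,n}(s)-1)^i$ with $\lambda_i^{(k)}\to\lambda_i$.

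For each fixed $i$, the residual sum $\sum_{k=A(nu)+1}^{A(nt)}(1-\fl_k)\lambda_i^{(k)}(f_{k,A(nt)}(s)-1)^i$ is handled as a Riemann sum: the shape-function analysis underlying Lemma~\ref{lemma:funclimit1} gives $f_{k,A(nt)}(s)\to h_{b}(s)$ whenever $\fl_{k,A(nt)}\to b$, and the telescoping identity $(1-\fl_k)\fl_{k,A(nt)}=\fl_{k,A(nt)}-\fl_{k-1,A(nt)}$ exhibits the sum as a Riemann--Stieltjes sum on the partition $\{b_k=\fl_{k,A(nt)}\}$ of $[u/t,1]$. A Lemma~\ref{lemma:anjk}-type argument absorbs the perturbation $\lambda_i^{(k)}\to\lambda_i$ and delivers the limit $(-1)^i\lambda_i I_i(u/t,s)$, where
\[
I_i(\alpha,s) := \int_{\alpha}^{1}\frac{b^{i-1}}{\bigl(\frac{1}{1-s}+\frac{\nu}{2}(1-b)\bigr)^{i}}\,\dd b.
\]
To identify $\sum_{i=1}^{\kappa}(-1)^{i}\lambda_i I_i(u/t,s)$ with $\log f_Y(s)-\log f_Y(h_{u/t}(s))$, denote by $F_i$ the $i$-th summand of $\log f_Y$; a direct computation yields $F_i'(s)=(-1)^{i+1}(1-s)^{i-1}/(1+(\nu/2)(1-s))$, whence $\partial_\alpha F_i(h_\alpha(s))=(-1)^{i+1}\alpha^{i-1}/(1/(1-s)+(\nu/2)(1-\alpha))^{i}$, and integrating in $\alpha$ from $1$ (using $h_1(s)=s$) down to $u/t$ gives $F_i(s)-F_i(h_{u/t}(s))=(-1)^{i}I_i(u/t,s)$, as required. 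The case $\nu=0$ is analogous with polynomial antiderivatives replacing logarithms.

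For general initial value $y\in\N$, independence of the $y$ initial lines of descent from the immigrant contributions gives $\E[s^{Y_{A(nt)}}\mid Y_{A(nu)}=y]=f_{A(nu),A(nt)}(s)^{y}\,g_{A(nu),A(nt)}(s)$, and Lemma~\ref{lemma:funclimit1} combined with the previous step delivers the claimed limit. It remains to verify that $G_y(s,t)$ is a standard honest transition function of a continuous time branching process with immigration with the advertised parameters: $G_y(1,t)=1$ and $G_y(s,0)=s^y$ follow from $h_1(s)=s$ and $f_Y(1)=1$; Chapman--Kolmogorov reduces to $h_{e^{-t_1}}\circ h_{e^{-t_2}}=h_{e^{-(t_1+t_2)}}$, under which the $f_Y$-factors telescope; right-continuity at $t=0$ comes from continuity of $a\mapsto h_a$; and the identification with the parameters $(\alpha,\beta,f,h)$ amounts to the algebraic compatibility $-\alpha(f(s)-s)(\log f_Y)'(s)=\beta(h(s)-1)$, a short check using the formulas for $F_i'$ and the explicit $f,h$. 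The main technical obstacle is the uniform error control in the first step under (C4'), where an infinite tail of the expansion of $h_k$ must be absorbed into $o(1-\fl_k)$ uniformly in $k\in[A(nu)+1,A(nt)]$; the two conditions of (C4') are precisely tailored for this, while the Riemann-sum step requires uniformity in $s$ of the shape-function approximation $f_{k,A(nt)}(s)\approx h_{\fl_{k,A(nt)}}(s)$, provided by Lemma~\ref{lemma:phi}.
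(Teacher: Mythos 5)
Your argument is essentially correct, but it takes a much longer road than the paper does. The paper's entire proof of the two limits is the observation that the product $g_{j,n}(s)=\prod_{l=j+1}^{n}h_l(f_{l,n}(s))$ telescopes into the ratio $g_n(s)/g_j(f_{j,n}(s))$ of \emph{one-dimensional} marginal generating functions; the limits then follow in one line from the already-established marginal convergence $g_n(s)\to f_Y(s)$ (Theorems 2 and 3 of \cite{KevKub}) together with $f_{A(nu),A(nt)}(s)\to h_{u/t}(s)$ from Lemma \ref{lemma:funclimit1}. You instead attack the product directly by taking logarithms, expanding $h_k$ in factorial moments, and evaluating the resulting Riemann--Stieltjes sum on the partition $b_k=\fl_{k,A(nt)}$ of $[u/t,1]$ --- in effect re-proving the relevant parts of \cite{KevKub} in the two-index setting and then verifying by an explicit $\partial_\alpha$-computation that $\sum_i(-1)^i\lambda_i I_i(u/t,s)=\log f_Y(s)-\log f_Y(h_{u/t}(s))$. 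I checked your integral identity and signs (note that your displayed $F_i'$ is missing the factor $\lambda_i$, though the final bookkeeping is consistent), and your Chapman--Kolmogorov telescoping via $h_{e^{-t_1}}\circ h_{e^{-t_2}}=h_{e^{-(t_1+t_2)}}$ is a cleaner verification than the paper's appeal to limits of discrete-time transition functions. What your route buys is self-containedness and an independent derivation of the closed form of $f_Y$; what it costs is that you must redo the hardest technical step of \cite{KevKub}, namely the uniform absorption of the infinite tail of the factorial-moment expansion into $o(1-\fl_k)$ under (C4'). You flag this but do not resolve it: (C4') bounds $\sup_n m_{n,i}/(i!\,|1-\fl_n|)$ separately for each $i$ and controls only $\limsup_n\lambda_n^{1/n}$, so the truncation estimate $\sum_{i>N}C_i|f_{k,n}(s)-1|^i$ needs an additional argument (e.g.\ the alternating/bracketing structure of the expansion of a p.g.f.\ around $s=1$) to be made rigorous. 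The paper sidesteps this entirely by citation, which is the pragmatic choice here; if you keep your self-contained route, that tail estimate is the one step you must actually write out.
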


\begin{proof}
By the branching property (see e.g.~the proof of Theorem 2 in \cite{KevKub}),
\[
g_{j,n}(s) = \prod_{l=j+1}^n h_l(f_{l,n}(s))
= \frac{g_n(s)}{g_j(f_{j,n}(s))}.
\]
Therefore, by Theorem 2 or 3 in \cite{KevKub} and Lemma \ref{lemma:funclimit1}
\[
\lim_{n \to \infty} g_{A(nu), A(nt)}(s) = 
\lim_{n \to \infty} \frac{g_{A(nt)}(s)}{g_{A(nu)}(f_{A(nu), A(nt)}(s))} = 
\frac{f_Y(s)}{f_Y(h_{u/t}(s))}.
\]

The second statement follows similarly, upon noting that 
\[
\E [ s^{Y_k} | Y_\ell = y] = g_{\ell, k}(s) (f_{\ell, k}(s))^{y}.
\]

Since $G_y(\cdot,t)$ is a limit of generating functions and $G_y(1,t) = 1$, it 
is clear that $G_y(\cdot, t)$ is generating function, thus it determines 
the transition functions via 
\[
G_y(s, t) = \sum_{j=0}^\infty p_{y,j}(t) s^j,
\]
where $p_{y,j}(t) \in [0,1]$, $\sum_{j=0}^\infty p_{y,j}(t) \equiv 1$, 
$p_{y,j} (0) = \delta_{y,j}$. Furthermore, it satisfies the Chapman--Kolmogorov
equation, since it is a limit of generating functions of discrete time 
Markov process. The branching property follows from the explicit form of 
$G_y(s,t)$; see \cite{LiChenPakes}.
\end{proof}

By Theorem 2.1 in \cite{LiChenPakes}, see the displayed equation after (2.2),
\[
a(s) = \lim_{t \downarrow 0} \frac{\partial}{\partial t} h_{e^{-t}}(s)
= (1-s) \left( 1 + \frac{\nu}{2} (1-s) \right),
\]
while 
\[
b(s) = \lim_{t \downarrow 0} \frac{\partial}{\partial t} 
\frac{f_Y(s)}{f_Y(h_{e^{-t}}(s))} = \sum_{k=1}^\kappa \lambda_k (s-1)^k,
\]
where the second equality follows after a straightforward calculation.
Recall that under (C4) $\kappa = K -1 $, and under (C4') $\kappa = \infty$.
From \eqref{eq:def-ab} we obtain that the offspring and immigration 
rates and generating functions are given by 
\[
\alpha = 1 + \nu, \quad 
f(s) = \frac{1+\frac{\nu}{2}}{1+\nu} + \frac{\frac{\nu}{2}}{1+\nu} s^2,
\]
and 
\[
\beta = - b(0), \quad h(s) = 1 + \frac{b(s)}{\beta}.
\]

\begin{proof}[Proof of Theorem \ref{thm:conv-immig}]
The proof is similar to the proof of Theorem \ref{thm:fdd>1}, so we only sketch 
it, emphasizing the differences.

The convergence of finite dimensional distributions follows from the 
convergence of one dimensional marginals, and the convergence of the 
transition probabilities. The first statement is Theorem 2 or 3 in \cite{KevKub}, 
the second is Lemma \ref{lemma:funclimit-bev}.

We prove tightness of $(Y_{A(nt)})_{t\geq \varepsilon}$ for all $\varepsilon > 0$ fixed.
Similarly to the proof of Theorem \ref{thm:fdd>1} it is enough to check that
\begin{equation}\label{eq:Y>K}
\lim_{K\to\infty}\limsup_{n\to\infty} 
\p(\max_{A(n\varepsilon) \leq k \leq A(nm)} Y_k > K) = 0    
\end{equation}
and
\begin{equation}\label{eq:cont-mod}
\lim_{\delta\to\infty} \limsup_{n\to\infty}\p(w'((Y_{A(nt)})_{t\in[\varepsilon, m]}, \delta) \geq \eta) = 0, 
\end{equation}
for all $\eta > 0$.
By Lemma \ref{lemma:anjk}
\[
\lim_{n\to\infty} \E(Y_{A(nm)}) = 
\lim_{n \to \infty} \sum_{j=1}^{A(nm)} 
\frac{m_{j,1}}{1 - \fl_j} (1 - \fl_j) \fl_{j,A(nm)} = \lambda_1 <\infty.
\]
Let $\tau = \min\{ k: \, A(n\varepsilon) \leq k \leq A(nm), Y_k > K \}$,
and with $C_0$ defined in \eqref{eq:C_0},
\[
\begin{split}
\E(Y_{A(nm)} \ind_{\{A(n\varepsilon) \leq \tau \leq A(nm)\}}) & 
\geq \sum_{k=A(n\varepsilon)}^{A(nm)} \left[
K \fl_{k, A(nm)} + \sum_{j=k+1}^{A(nm)} m_{j,1} \fl_{j,A(nm)}
\right] \p(\tau = k) \\
& \geq \frac{K}{C_0} \fl_{A(n\varepsilon), A(nm)} \p(\max_{A(n\varepsilon) \leq k \leq A(nm)} Y_k > K),
\end{split}
\]
thus \eqref{eq:Y>K} holds.
Again, to prove \eqref{eq:cont-mod} we need to bound the probability of 
having many jumps or at least three close jumps. 
As in \eqref{eq:aux-f-n-bound} and \eqref{eq:prob-jump}, for any $y_k \leq K$, with 
$h_{k+1}[0] = \p(\varepsilon_{k+1} = 0)$,
\[
\begin{split}
\p(Y_{k+1} \neq Y_k | Y_k = y_k) & \leq 1 - (f_{k+1}[1])^K h_{k+1}[0] 
\leq c K |1-\fl_{k+1}|,
\end{split}
\]
since by our assumptions on the immigration, 
\[
h_{k+1}[0] = h_{k+1}(0) \geq 1 - m_{k+1,1} \geq 1 - c |1-\fl_{k+1}|.
\]
Hence, for any $N$, $y_{k_i} \neq y_{k_i +1}$, $y_{k_i} \leq K$, $i=1,\ldots, N$
\[
\p(Y_{k_i} = y_{k_i}, Y_{k_i+1} = y_{k_i+1}, i=1,\dots, N)
\leq (cK)^N |1-\fl_{k_1}| \cdots |1-\fl_{k_N}|,
\]
and thus with $N=\lfloor \frac{m-\varepsilon}{\delta}\rfloor + 1$
\[
\begin{split}
& \p((Y_{A(nt)})_{t\in [\varepsilon, m]} \text{ has at least $N$ jumps}, \max_{A(n\varepsilon) \leq k \leq A(nm)} Y_{k} \leq K) \\
& \leq \sum_{A(n\varepsilon) \leq k_1 < \dots <k_N \leq A(nm)}
\p(Y_{k_i} \neq Y_{k_i+1}, i=1, \dots, N, \max_{A(n\varepsilon) \leq k \leq A(nm)} Y_k \leq K) \\
& \leq (cK^3)^N \frac{1}{N!}\left(
\sum_{k=A(n\varepsilon)}^{A(nm)} |1-\fl_k|
\right)^N \leq \frac{C(K)^N}{N!},
\end{split}
\]
and similarly for any fixed $t_0 \in [\varepsilon, m]$ and $\delta > 0$,
\[
\begin{split}
& \p((Y_A(nt))_{t\in [t_0, t_0 + \delta]} \text{has at least 3 jumps}, \max_{A(n\varepsilon) \leq k \leq A(nm)} Y_k \leq K) \\
& \leq ((cK + d)K(K-1))^3 \frac{1}{3!} \left(
\sum_{k=A(nt_0)}^{A(n(t_0 + \delta))} |1-\fl_k|
\right)^3 \leq C(K) \delta^3,
\end{split}
\]
thus \eqref{eq:cont-mod} follows.
\end{proof}

The proof of Corollary \ref{corr:imm} is the same as the proof of Corollary \ref{cor:reverse}.

\medskip

\noindent \textbf{Acknowledgement.}
This research was supported by the Ministry of Culture and Innovation of 
Hungary from the National Research, Development and Innovation Fund, project no.~TKP2021-NVA-09.


\end{document}